\documentclass[final,leqno]{siamltex}

\pagestyle{headings}

\usepackage[utf8]{inputenc}
\usepackage[english]{babel}
\usepackage{amsmath, amssymb}

\usepackage[hang]{caption}
\usepackage{graphicx}
\usepackage{subfig}
\captionsetup[subfigure]{labelformat=simple,listofformat=subsimple}

\usepackage{color}

\usepackage{nicefrac}

% extras for tables (e.g. align contents at ".")
\usepackage{multirow}
\usepackage{dcolumn}
\newcolumntype{d}[1]{D{.}{.}{#1}}

%%%

\usepackage{hyperref}
\usepackage{bookmark}
\hypersetup{
  pdftitle    = {Solving the Monge-Amp\`ere Equations for the Inverse Reflector Problem},
  pdfauthor   = {Kolja Brix, Yasemin Hafizogullari, and Andreas Platen},
  pdfkeywords = {Inverse reflector problem, elliptic Monge-Amp\`ere equation, B-spline collocation method, Picard-type iteration},
  pdfborder={0 0 0.5}
}

%%%

\clubpenalty = 10000
\widowpenalty = 10000
\displaywidowpenalty = 10000

% standard set symbols

\newcommand{\R}{\ensuremath{\mathbb{R}}}
\newcommand{\Z}{\ensuremath{\mathbb{Z}}}
\newcommand{\N}{\ensuremath{\mathbb{N}}}
\newcommand{\DD}{\ensuremath{\textnormal{D}}}
\newcommand{\dd}{\ensuremath{\textnormal{d}}}

\newcommand{\norm}[1]{\ensuremath{\|#1\|}}
\newcommand{\normlr}[1]{\ensuremath{\left\|#1\right\|}}
\renewcommand{\vec}[1]{\ensuremath{\mathbf{#1}}}

% change enumeration style

% more mathematical environments
\newtheorem{remark}{Remark}

\newenvironment{ownproblem}[1]
  {\problem}
  {\endproblem}

\author{Kolja Brix\footnotemark[4] \and
Yasemin Hafizogullari\footnotemark[4] \and
Andreas Platen\footnotemark[4]}

\date{\today}
\title{Solving the Monge-Amp\`ere Equations\\ for the Inverse Reflector Problem}

\begin{document}

\maketitle

\renewcommand{\thefootnote}{\fnsymbol{footnote}}

\footnotetext[4]{Institut f{\"u}r Geometrie und Praktische Mathematik, RWTH Aachen, Templergraben 55, 52056 Aachen, Germany
(\texttt{brix@igpm.rwth-aachen.de},
\texttt{yasemin.hafizogullari@rwth-aachen.de},
\texttt{andreas.platen@rwth-aachen.de}),
\url{http://www.igpm.rwth-aachen.de}.}

\renewcommand{\thefootnote}{\arabic{footnote}}

\begin{abstract}
The inverse reflector problem arises in geometrical nonimaging optics:
Given a light source and a target, the question is how to design a reflecting free-form surface such that a desired light density distribution is generated on the target, e.g., a projected image on a screen.
This optical problem can mathematically be understood as a problem of optimal transport and equivalently be expressed by a secondary boundary value problem of the Monge-Amp\`ere equation, which consists of a highly nonlinear partial differential equation of second order and constraints.
In our approach the Monge-Amp\`ere equation is numerically solved using a collocation method based on tensor-product B-splines, in which nested iteration techniques are applied to ensure the convergence of the nonlinear solver and to speed up the calculation.
In the numerical method special care has to be taken for the constraint: It enters the discrete problem formulation via a Picard-type iteration.
Numerical results are presented as well for benchmark problems for the standard Monge-Amp\`ere equation as for the inverse reflector problem for various images. The designed reflector surfaces are validated by a forward simulation using ray tracing.
\end{abstract}

\begin{keywords}
Inverse reflector problem, elliptic Monge-Amp\`ere equation, B-spline collocation method, Picard-type iteration
\end{keywords}

\begin{AMS}
35J66, % Nonlinear boundary value problems for nonlinear elliptic equations
35J96, % Elliptic Monge-Ampere equations
35Q60, % PDEs in connection with optics and electromagnetic theory
65N21, % Inverse problems
65N35, % Spectral, collocation and related methods
\end{AMS}

\pagestyle{myheadings}
\thispagestyle{plain}

\markboth{BRIX, HAFIZOGULLARI, AND PLATEN}{SOLVING THE INVERSE REFLECTOR PROBLEM}

%%%%%%%%%%%%%%%%%%%%

\section{Introduction}

Suppose we have a light source with a given directional characteristic and we would like to generate a prescribed illumination pattern on a target, e.g., project a logo onto a wall.
The classical approach using an aperture has the disadvantage that a part of the light hits the aperture and is lost for target illumination. This efficiency reduction also exists for slide projectors, where some light is absorbed by the film.

In order to use the entire luminous flux emitted by the light source to illuminate the target an optical system comprised of one or more free-form reflectors or free-form lenses can be used instead of an aperture. Then the loss of light can be neglected and the desired illumination pattern is encoded in the shape of the optical surfaces, that in general is unknown.
In the following we will mainly focus on this kind of problem with only one mirror, i.e., we shall determine one desired reflecting free-form surface. This nonlinear inverse problem is called the \emph{inverse reflector problem}, which is a well-known problem in nonimaging optics~\cite{Chaves2008,WMB2005}.

Already 2000 years ago reflectors projecting images, called Chinese magic mirrors, have been hand-crafted of bronze in China and Japan, but the recipe has been lost and reconstructed several times over the ages; see~\cite{Berry2006} and~\cite{MY2001}.
Today such free-form optics are important in illumination applications.
For example they are used in automotive industry for the construction of headlights that use the full light emitted by the lamp to illuminate the road but at the same time do not glare oncoming traffic; see, e.g., \cite{ZNC2011}.
Some other applications include homogeneous illumination for machine-vision purposes or the realization of prescribed patterns in architecture illumination.

However, the solution of the inverse reflector problem is anything but trivial. There are many different schemes to determine a desired reflector like the method of supporting ellipsoids~\cite{KO1997,KO1998} and trial and error approaches~\cite{ASG2008,FDL2010,MMP2009,Savage2007}. The biggest problem of these methods is the computational effort needed to accurately compute reflectors for complex desired illumination patterns on the target. In the beginning of the 21st century it turned out that solving a corresponding partial differential equation (PDE) is a high potential approach for this problem as already indicated in~\cite{RM2002} in 2002. This equation is of \emph{Monge-Amp\`ere type}, which is a family of strongly nonlinear second order PDEs; see~\cite{Gutierrez2001} for details on the theory of Monge-Amp\`ere equations. A first equation of this family was presented by Gaspard Monge at the beginning of the 19th century in his work ``M\'emoire sur la th\'eorie des d\'eblais et des remblais''~\cite{Monge1781} and studied later again by Andr\'e-Marie Amp\`ere in 1820~\cite{Ampere1820}. Equations of this type often arise in the context of \emph{optimal transport problems}, where the task is to find the optimal way to transport excavated material (French: \emph{d\'eblai}), e.g., sand, to piles (French: \emph{remblai}) without losing any of the total mass.
These problems often have an economic background: If one minimizes the cost of transport, which is measured by a quadratic cost functional, under the constraint of \emph{total mass conservation}, one obtains a Monge-Amp\`ere equation.~\cite{Villani2003}
In this spirit, the inverse reflector problem deals with the transportation of light under the constraint that the total light flux emitted by the source is redistributed to the target surface.

The rest of this paper is arranged as follows:
In Section~\ref{sec:state_of_the_art} we review approaches to solve the inverse reflector problem.
Section~\ref{sec:math_formulation} focuses on the modeling of the inverse reflector problem via an equation of Monge-Amp\`ere type.
Our new approach for numerically solving equations of Monge-Amp\`ere type is detailed in Section~\ref{sec:collocation_method}.
In particular, we explain how to handle the boundary conditions arising in the inverse reflector problem.
Numerical results for benchmark problems for the Monge-Amp\`ere equations and for the inverse reflector problem are presented in Section~\ref{sec:results}.
The paper closes with the conclusion and an outlook in Section~\ref{sec:conclusionoutlook}.

%%%%%%%%%%%%%%%%%%%%

\section{State of the art of the solution of the inverse reflector problem}\label{sec:state_of_the_art}

There are plenty of existing approaches for solving the inverse reflector problem; for a detailed overview we refer the reader to~\cite{PP2005}.
Most of those methods can be grouped into three classes, namely \emph{brute-force approaches}, \emph{methods of supporting ellipsoids}, and \emph{Monge-Amp\`ere approaches}. We give an overview of these methods in Sections~\ref{subsec:brute_force},~\ref{subsec:ellipsoid_method}, and~\ref{subsec:ma_methods}, respectively. Other techniques which do not fit into these three classes are discussed in Section~\ref{subsec:other_methods}.

%%%

\subsection{Brute-force approaches}\label{subsec:brute_force}

At the beginning of the 21st century many trial and error methods have been developed. The idea of these iterative schemes is as follows: For an initial reflector in optical setup the resulting illumination pattern on the target area is computed using a ray tracing software. Then the illumination pattern is compared with the desired one, where a typical measure of the error is the deviation at previously selected test points in the Euclidean norm.
Afterwards the reflector surface is slightly perturbed using ideas from optimization (e.g., simulated annealing) and the setup with the new reflector is simulated again. If the error has decreased, the new reflector is used as initial guess for the next iteration and the procedure is repeated; see, e.g., \cite{ASG2008,FDL2010,MMP2009,Savage2007}.

The advantage of these methods is, that there are only very few restrictions for the optical setting. For example even extended light sources and mirrors with different reflectivities can be considered.
However, the main drawback is the computing time required, because repeated simulations of the setup using costly ray tracing techniques are needed.

Another approach is presented by Weyrich, Peers, Matusik and Rusinkiewicz~\cite{WPM+2009}, where the mirror is assumed to be comprised of many small facets. Each facet is directed to a prescribed position on the target area. This composite reflector in general has a discontinuous surface, which causes artifacts and is not favorable for production. Therefore, in order to smoothen the solution in a post-processing step, the facets are sorted according to their slopes and adjusted with respect to their heights. Nevertheless, the results have relatively low quality and still show many artifacts.

%%%

\subsection{Methods of supporting ellipsoids}\label{subsec:ellipsoid_method}

An ellipse in the plane in general has two foci with the property that light emitted at one focus and reflected at the interior of the ellipse is focused at the second focus. Ellipsoids of revolution in three dimensions have the same property. Kochengin and Oliker~\cite{KO1997,KO1998} (see also~\cite{Oliker2006}) therefore proposed the following method: For each point on the target which needs to be illuminated one defines an ellipsoid of revolution whose one focus is located at the light source and the other one at the target point. However, for each target point there are infinitely many ellipsoids of revolution with this property only differing in their diameters. Therefore in an iterative process the diameters of the ellipsoids are determined starting from some initial guess for each ellipsoid. In each iteration the reflector is defined as the convex hull of the intersection of the interiors of all ellipsoids.
The result is a reflector whose surface consists of glued surface segments of ellipsoids of revolution. Since the initial reflector in general does not produce the desired illumination on the target in every iteration the diameters of each ellipsoid besides the first one are shrunken until convergence.

This method requires in each step a numerical integration over the emission solid angles of the source and a large number of optimization steps. Therefore the complexity of this scheme quickly grows with the number $K$ of ellipsoids of revolution. Suppose we would like to determine a reflector whose reflection on the target is exact for each target point up to an accuracy $\gamma>0$, then the number of iterations scales like  $\mathcal{O}(\frac{K^4}{\gamma}\log\frac{K^2}{\gamma})$; see~\cite{KO2003}.
Therefore it is difficult to use this method for practical illumination patterns of higher resolution.

For the special case where the target is assumed to be infinitely far away from the light source Caffarelli, Kochengin, and Oliker~\cite{CKO1999} developed a variant, which uses paraboloids of revolution instead of ellipsoids.

%%%

\subsection{Other methods}\label{subsec:other_methods}

The simultaneous multiple surfaces (SMS) method developed by Mi\~nano et al.~\cite{MG1992} constructs rotational symmetric optical systems which couple a prescribed set of incoming wave fronts with prescribed conjugate wave fronts. This method was also extended to design optics in non-rotational symmetric cases; see, e.g., \cite{BMB+2004}. Optical systems which are computed using the SMS method can be found in~\cite{BMB+2004a, MBL+2009, MBD+2004}. This method always computes a pair of surfaces. Hence it cannot be applied to solve the inverse reflector problem with just one reflective surface.

Wang~\cite{Wang1996,Wang2004a} shows that the inverse reflector problem is in fact an optimal transportation problem. By taking into account also the dual reflector the problem can be reformulated as a linear optimization problem. Unfortunately, since the number of linear inequality constraints quickly grows with the number of pixels, the complexity for the linear programming is very high and thus this method is not feasible for images of medium or higher resolution.

Another scheme has been developed by Fe{\ss}ler et al.~\cite{KOS+2009,ZFJ+2012}, which computes single refractive or reflective surfaces to produce a prescribed density distribution on a prescribed target. To the best of the authors' knowledge the method is not completely published.

%%%

\subsection{Monge-Amp\`ere approaches}\label{subsec:ma_methods}

The inverse reflector problem statement for a point light source can be considered as an optimal transportation problem leading to strongly nonlinear second order PDEs of Monge-Amp\`ere type; see, e.g., \cite{KW2010,Schruben1972}.

Brickell, Marder end Westcott~\cite{BMW1977} already in 1977 started to develop methods to solve the inverse reflector problem based on Monge-Amp\`ere type equations. Later Engl and Neu\-bauer~\cite{EN1991,Neubauer1997} investigated a conjugate gradient method with certain constraints to solve this problem via a Monge-Amp\`ere type equation. Ries and Muschaweck~\cite{RM2002} also developed a method based on this type of equations. However, this numerical method has only been published very fragmentarily to the best of the authors' knowledge.

A compromise between a trial and error method and the solution of a PDE is proposed by Fournier, Cassarly, and Rolland; see~\cite{Cassarly2010,FCR2008}.
While the case of extended light sources is considered, which poses many additional problems, the discussion is restricted to the special case of rotationally symmetric reflectors.
In an iteration at first the reflector for a point light source is computed by solving an ordinary differential equation. Then the resulting surface is tested using a ray tracing software in a setup with an extended light source. If the result is not good enough, the differential equation for a point light source is solved again for an adjusted target illumination pattern, where the modification comes from the difference between the simulated and the desired target illumination. This procedure is repeated until a stopping criterion holds true.

In a recent preprint Prins et al.~\cite{PTR+2013} derive an equation of Monge-Amp\`ere type for the inverse reflector problem and provide a numerical method for its solution only for a light source that produces parallel light beams, i.e., they aim for the special case of the far field where the light source is assumed to be infinitely far away from the reflector.
In this particular case the Monge-Amp\`ere type equation reduces to a simpler Monge-Amp\`ere equation, which is called a Monge-Amp\`ere equation of standard type and is easier to solve; see Section~\ref{sec:collocation_method} for a detailed discussion.

%%%%%%%%%%%%%%%%%%%%

\section{Mathematical formulation of the inverse reflector problem}\label{sec:math_formulation}

In Section~\ref{sec:state_of_the_art} we saw that there are many different approaches to solve the inverse reflector problem. As proposed in the approaches discussed in Section~\ref{subsec:ma_methods} we follow the strategy of first modeling the inverse reflector problem using a PDE of Monge-Amp\`ere type and solving this equation in a second step. We now therefore turn to a formulation of the problem in mathematical terms. There exist different approaches to deduce an equation of Monge-Amp\`ere type for this kind of problem~\cite{BMW1977,EN1991,KW2010,Schruben1972}.
We choose the formulation from Karakhanyan and Wang~\cite[Proposition 2.2]{KW2010}, which is based on an energy conservation equation. The advantage of this approach is that for the special case in which the light source is located in-plane with the target, we obtain a relatively simple Monge-Amp\`ere equation of standard type. In the following we use the notation and results given by Karakhanyan and Wang~\cite{KW2010}.

In Subsection~\ref{subsec:problem_statement} we first define the problem statement. Then a corresponding equation of Monge-Amp\`ere type is set up in Subsection~\ref{subsec:ma_equation} and a result and some remarks on the existence and uniqueness of the solution of this problem are discussed in Subsection~\ref{subsec:reflector_existence_uniqueness}.

%%%

\subsection{Problem statement}\label{subsec:problem_statement}

Let us now fix the mathematical description of the problem.

\begin{ownproblem}{IR}\label{problem:IR}
  (see, e.g., the introduction in~\cite{KW2010})\\
  Let a point light source be given, which emits light in all directions given by the set $U\subset S_2:=\{\vec{X}\in\R^3\,:\,\norm{\vec{X}}_2=1\}$. The luminous intensity of the source is modeled by the density function $f:U\rightarrow \R^+:=\{x\in\R\,:\,x>0\}$. Furthermore we have a target area $\Sigma$ given in implicit form as $\Sigma:=\{\vec{Z}\in\R^3\,:\,\psi(\vec{Z})=0\}$ for an appropriate function $\psi$. Let $g:\Sigma\rightarrow \R^+$ be a prescribed density function on the target area $\Sigma$.

  Find a reflector $\Gamma$ which redistributes the entire light emitted from the light source, such that the given target illumination defined by $g$ is generated; see Figure~\ref{fig:reflector_parametrization}.
\end{ownproblem}

\begin{figure}[ht]
  \centering
  \includegraphics{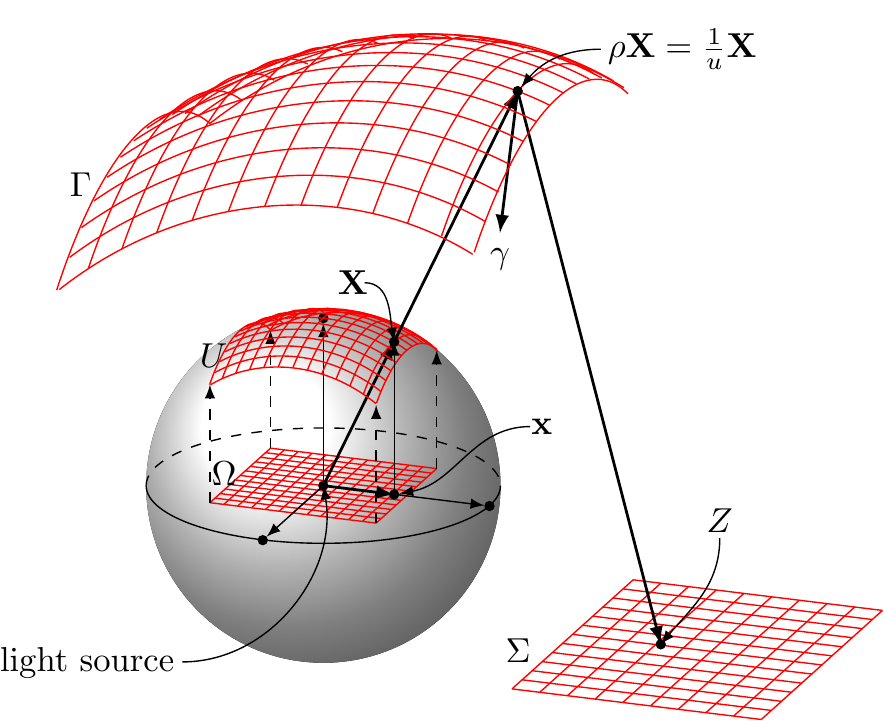}
  \caption{Sketch of the point light source, reflector $\Gamma$, and target $\Sigma$.}
  \label{fig:reflector_parametrization}
\end{figure}

%%%

\subsection{Monge-Amp\`ere equation}\label{subsec:ma_equation}

We now aim for deriving the Monge-Amp\`ere type equation corresponding to Problem~\ref{problem:IR}. Therefore we first parameterize the desired reflector by $\Gamma:=\{\rho(\vec{X})\vec{X}\,:\,\vec{X}\in U\}$, where $\rho:U\rightarrow\R^+$ is an appropriate distance function. As already indicated in Figure~\ref{fig:reflector_parametrization} we assume that $U$ is located in the northern hemisphere of $S_2$, such that each $\vec{X}\in U$ can be expressed by $\vec{X}=(\vec{x},\omega(\vec{x}))^T$ where $\vec{x}\in\R^2$ and $\omega:=\omega(\vec{x}):=\sqrt{1-\norm{\vec{x}}_2^2}$.
Therefore we can use the domain $\Omega:=\{\vec{x}\in\R^2\,:\,(\vec{x},\omega(\vec{x}))^T\in U\}$ in $\R^2$ to parameterize the set $U \subset \R^3$.

Under these assumptions Karakhanyan and Wang~\cite[Proposition 2.2]{KW2010} give a PDE for Problem~\ref{problem:IR} for the desired function $\rho$. However, substituting $\rho$ by $\frac{1}{u}$ results in an equation with slightly simpler expressions (see~\cite[Remark 2.1]{KW2010}), which leads to the following result.

\begin{theorem}\label{theorem:MA_reflector}
  (see~\cite[Remark 2.1 and Proposition 2.2]{KW2010})\\
  Let the density functions $f$ and $g$ be given, satisfying the \emph{energy conservation}
  \begin{align}\label{eq:energy_conservation}
    \int_{U} f\,\dd S= \int_{\Sigma} g \, \dd S.
  \end{align}
  Moreover, let the shape of $\Sigma$ be defined by the function $\psi$ as in Problem~\ref{problem:IR}.
  Define
  \begin{align*}
    \omega &:= \sqrt{1-\norm{\vec{x}}_2^2},
      & t &:= 1-u\frac{z_3}{\omega},\\
    \tilde{a} &:= \norm{\DD u}_2^2 - (u- \DD u^T \vec{x} )^2,
      & \hat{\DD}u &:= (\DD u,0)^T,\\
    \tilde{b} &:= \norm{\DD u}_2^2 +u^2 -(\DD u^T \vec{x})^2,
      & \mathcal{N} &:= I+\frac{\vec{x}\vec{x}^T}{\omega^2},
  \end{align*}
  and
  \begin{align*}
    \vec{X} &= (\vec{x},\omega),
      & \vec{Z}_0 &:= \frac{2}{\tilde{a}} \hat{\DD}u,
      & \vec{Z} &= \frac{1}{u}\vec{X} + t\left(\vec{Z}_0-\frac{1}{u}\vec{X}\right),
  \end{align*}
  where $\vec{X}=(x_1,x_2,x_3)^T\in U$ and $\vec{Z}=(z_1,z_2,z_3)^T\in\Sigma$.
  We assume that $t>0$, i.e., $\frac{\omega}{u}>z_3$, and $\nabla\psi^T(\vec{Z}-\frac{1}{u}\vec{X})>0$.
  Then the inverse reflector problem states:

  Find a function $u:\Omega\rightarrow \R$, such that
  \begin{align}
   \label{eq:MA_reflector}
    &\det\left(\textnormal{D}^2 u+\frac{\tilde{a}z_3}{2tx_3}\mathcal{N}\right)
      = -\frac{(u\vec{Z}_0-\vec{X})\cdot\nabla \psi}{t^2\norm{\nabla\psi}_2\omega}\cdot
        \frac{\tilde{a}^3}{4\tilde{b}}\cdot\frac{f(\vec{X})}{\omega g(\vec{Z})} &&\text{in }\Omega\text{ and}\\
   \label{eq:MA_reflector_second_boundary}
    &T:U\,\,\rightarrow\,\,\Sigma,\quad
    \vec{X}\,\,\mapsto \,\vec{Z}
    &&\text{is surjective.}
  \end{align}
\end{theorem}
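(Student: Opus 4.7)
The plan is to follow the derivation in Karakhanyan--Wang, splitting the proof into a geometric part (setting up the reflection map $T$) and an analytic part (extracting the Monge-Amp\`ere equation from local energy conservation). First I would fix the parametrization $\vec{X}=(\vec{x},\omega(\vec{x}))^T$ of $U$ and describe the reflector as the graph $\rho(\vec{X})\vec{X}$ with $\rho=1/u$. At the reflecting point $\frac{1}{u}\vec{X}$, I would compute an outward unit normal $\vec{n}$ by differentiating the implicit equation for $\Gamma$; after replacing $\rho$ by $1/u$ and using $\hat{\DD}u=(\DD u,0)^T$, the relevant directional quantities reduce to $\tilde a$ and $\tilde b$ (these are essentially $\|\vec{X}-u\hat{\DD}u\|_2^2$-type scalars, up to sign corrections coming from the $\omega$-component).

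Next I would invoke the law of reflection $\vec{Y}=\vec{X}-2(\vec{X}\cdot\vec{n})\vec{n}$ to obtain the direction of the reflected ray and verify that, after simplification, $\vec{Y}$ is a positive multiple of $\vec{Z}_0-\frac{1}{u}\vec{X}$, where $\vec{Z}_0:=\frac{2}{\tilde a}\hat{\DD}u$. Parametrizing the reflected ray by $s\mapsto\frac{1}{u}\vec{X}+s(\vec{Z}_0-\frac{1}{u}\vec{X})$ and imposing $\psi(\vec{Z})=0$, the energy-admissible intersection is the one with $s=t$, where $t=1-u z_3/\omega$ is recovered from the explicit form of $\vec{Z}_0$ and the target condition; the assumption $\nabla\psi^T(\vec{Z}-\frac{1}{u}\vec{X})>0$ picks the physically correct root. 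This yields the map $T$ in~\eqref{eq:MA_reflector_second_boundary}, and surjectivity is nothing but the requirement that the entire prescribed illumination on $\Sigma$ is produced from $U$, which must be imposed as a boundary-type constraint exactly because it is not encoded in~\eqref{eq:MA_reflector}.

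For the PDE, I would pass to local energy balance $f(\vec{X})\,\dd S_U=g(T(\vec{X}))\,\dd S_\Sigma$. Pulling $\dd S_U$ back to $\Omega$ gives the factor $1/\omega$, while $\dd S_\Sigma$ pulled back along $T$ gives $|\det\DD T|$ together with the surface Jacobian on $\Sigma$; this surface Jacobian is exactly where the factor $(u\vec{Z}_0-\vec{X})\cdot\nabla\psi/(t^2\|\nabla\psi\|_2)$ appears, obtained by writing the oriented area element on $\Sigma$ against the direction of the reflected ray. Global consistency of this pointwise identity is guaranteed by the energy conservation hypothesis~\eqref{eq:energy_conservation}.

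The main obstacle — and the step that swallows most of the computation — is evaluating $\det\DD T$ in terms of $u$. Differentiating $\vec{Z}=\frac{1}{u}\vec{X}+t(\vec{Z}_0-\frac{1}{u}\vec{X})$ with respect to $\vec{x}$ produces second derivatives of $u$ through $\vec{Z}_0$ and $t$, together with first-order contributions coming from $\vec{X}=(\vec{x},\omega)^T$ (hence the appearance of $\mathcal{N}=I+\vec{x}\vec{x}^T/\omega^2$, which is precisely the metric induced on $U\subset S_2$ by the $\vec{x}$-chart). Collecting the terms, factoring out the scalar prefactor $\tilde a^3/(4\tilde b)$ coming from the norms of the reflected direction and from $d\vec{Z}_0$, and rearranging so that the second-derivative block takes the form $\DD^2 u+\frac{\tilde a z_3}{2tx_3}\mathcal{N}$ is the delicate algebraic part; once it is done, equating $f\,\dd S_U=g\,\dd S_\Sigma$ produces~\eqref{eq:MA_reflector}. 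I would carry out this last rearrangement in coordinates, using $\DD t=-\frac{z_3}{\omega}\DD u-u\DD(z_3/\omega)$ and the identity $\DD\hat{\DD}u=(\DD^2 u,0)^T$, and check the sign against the orientation prescribed by the assumptions $t>0$ and $\nabla\psi^T(\vec{Z}-\frac{1}{u}\vec{X})>0$.
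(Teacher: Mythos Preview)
The paper does not supply its own proof of this theorem; it is quoted directly from Karakhanyan--Wang~\cite[Remark~2.1 and Proposition~2.2]{KW2010} after the substitution $\rho=1/u$. Your plan to reproduce that derivation --- computing the reflector normal from the graph $\rho(\vec{X})\vec{X}$, applying the law of reflection to obtain the reflected direction, parametrizing the reflected ray and intersecting it with $\Sigma$ to define $T$, and then extracting the PDE from the local energy balance $f\,\dd S_U=g(T)\,\dd S_\Sigma$ via the Jacobian of $T$ --- is therefore exactly the intended route, and the outline matches the structure of that reference.
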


Let us note some comments on the above theorem.

\begin{remark}
  \begin{enumerate}
    \item
      The assumption $\frac{\omega}{u}>z_3$ means, that the third component of the vector that corresponds to the position where a light ray hits the reflector is larger than the third component of the point where this light ray hits the target $\Sigma$ after reflection. In other words, the light ray must be directed downwards after the reflection.
    \item
      The second condition $\nabla\psi^T(\vec{Z}-\frac{1}{u}\vec{X})>0$ is needed for a technical reason and just fixes the direction of the normal on $\Sigma$. If this constraint is not fulfilled one can simply replace $\psi(\vec{Z})$ by $-\psi(\vec{Z})$ for all $\vec{Z}\in\R^3$.
    \item
      If one sets $z_3\equiv 0$, the equation \eqref{eq:MA_reflector} simplifies to
      \begin{align}\label{eq:MA_reflector_special}
        \det\left(\textnormal{D}^2 u\right)
            &= -\frac{(u\vec{Z}_0-\vec{X})\cdot\nabla \psi}{\norm{\nabla\psi}_2\omega}\cdot
              \frac{\tilde{a}^3}{4\tilde{b}}\cdot\frac{f(\vec{X})}{\omega g(\vec{Z}_0)},
      \end{align}
      which is a \emph{Monge-Amp\`ere equation of standard type}, i.e., the left hand side is only the determinant of the Hessian of $u$; see~\cite[Remark 2.1]{KW2010}.
  \end{enumerate}
\end{remark}

\subsection{Existence and uniqueness}\label{subsec:reflector_existence_uniqueness}

The existence of solutions of Problem~\ref{problem:IR} is ensured by the following result.

\begin{theorem}
    (see~\cite[Theorem A]{KW2010})\\
    Suppose we have two functions $f$ and $g$ which fulfill the energy conservation condition \eqref{eq:energy_conservation} as in Theorem~\ref{theorem:MA_reflector}. Let moreover $\vec{p}$ be an element of the light cone $\mathcal{C}_U$ of the light source, i.e.,
    \begin{align*}
      \vec{p}\in\mathcal{C}_U:=\{\vec{p}\in\R^3\,:\,\frac{\vec{p}}{\norm{\vec{p}}_2}\in U\},
    \end{align*}
    and one of the conditions
    \begin{enumerate}
      \item $\norm{\vec{p}}_2>2\sup_{\vec{q}\in\Sigma}\norm{\vec{q}}_2$ or
      \item $\Sigma\subset\mathcal{C}_V:=\{t \vec{X}\,:\, t>0, \vec{X}\in V\}$ for a region $V\subset S^2$ with $\bar{U}\cap\bar{V}=\emptyset$
    \end{enumerate}
    is satisfied.
    Then there exists a reflecting surface, which is the solution of the inverse reflector problem~\ref{problem:IR} and contains the point $\vec{p}$.
\end{theorem}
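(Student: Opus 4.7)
The plan is to adapt the method of supporting ellipsoids of Kochengin and Oliker (recalled in Section~\ref{subsec:ellipsoid_method}) to produce a generalized reflector, using hypotheses~(a) or~(b) to supply the a priori estimates required for compactness. First I would discretize, approximating $g\,\dd S$ by a sum of point masses $\sum_{i=1}^N \mu_i \delta_{\vec{q}_i}$ with $\vec{q}_i\in\Sigma$ and $\sum_i \mu_i = \int_U f\,\dd S$. Exploiting the focal property of ellipsoids of revolution, any ray from the source (placed at the origin) is reflected by the ellipsoid $E_i$ with foci $\vec{0}$ and $\vec{q}_i$ directly onto $\vec{q}_i$; I would therefore seek an approximate reflector as the boundary of an intersection of ellipsoidal interiors,
\begin{align*}
\Gamma_N = \partial\Bigl(\bigcap_{i=1}^N \textnormal{int}\, E_i(\rho_i)\Bigr),
\end{align*}
parametrized by $N$ semi-axis lengths $\rho_i>0$. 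The requirement $\vec{p}\in\Gamma_N$ fixes an overall scaling and pins down one of the $\rho_i$.

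The remaining parameters I would determine by a variational argument in the spirit of Oliker and collaborators: one introduces a functional on $(\rho_1,\dots,\rho_N)$ which is concave in a suitable parametrization and whose first-order optimality conditions encode exactly the discrete conservation requirement that the solid angle reflected by the facet lying on $E_i$ carries mass $\mu_i$ under the weight $f$. Existence of a maximizer then follows by standard compactness, provided the admissible $(\rho_i)$ stay in a compact subset of $(0,\infty)^N$. This a priori bound is precisely where hypotheses~(a) and~(b) enter: in case~(a), $\norm{\vec{p}}_2 > 2\sup_{\vec{q}\in\Sigma}\norm{\vec{q}}_2$ forces the source to be far enough from $\Sigma$ that any ellipsoid through $\vec{p}$ with foci at $\vec{0}$ and $\vec{q}\in\Sigma$ has uniformly controlled eccentricity; in case~(b), the angular gap $\bar U\cap\bar V = \emptyset$ yields a uniform lower bound on the reflection angle, which again bounds the eccentricity. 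Either way one obtains a uniform obstruction to facets collapsing onto the source or escaping to infinity.

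Finally I would pass to the limit $N\to\infty$. Writing $\Gamma_N$ as $\{\rho_N(\vec{X})\vec{X}:\vec{X}\in U\}$, the uniform bounds on $(\rho_i)$ combined with the Lipschitz dependence of the ellipsoids on their parameters give uniform Lipschitz estimates on $\rho_N$; Arzel\`a-Ascoli then yields a subsequence converging uniformly to some $\rho_\infty$. Weak-$*$ convergence of the discrete target measures to $g\,\dd S$ implies that the corresponding discrete transport maps $T_N$ converge to a limit $T_\infty$ whose pushforward of $f\,\dd S$ is $g\,\dd S$, so that $\Gamma = \{\rho_\infty(\vec{X})\vec{X}:\vec{X}\in U\}$ contains $\vec{p}$ and is a generalized solution of~\eqref{eq:MA_reflector}--\eqref{eq:MA_reflector_second_boundary}. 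The main obstacle is exactly this uniform control of $(\rho_i)$ across the discretization: without~(a) or~(b) the approximating reflectors can degenerate by retracting toward the source or escaping to infinity, and no limit reflector can be extracted.
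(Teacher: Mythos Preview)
The paper does not give its own proof of this theorem: the statement is quoted with attribution to~\cite[Theorem A]{KW2010} and the text immediately moves on to a discussion of non-uniqueness, with no argument supplied. There is therefore nothing in the present paper against which to compare your attempt.

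For what it is worth, your sketch follows the Kochengin--Oliker supporting-ellipsoid strategy of~\cite{KO1997,KO1998}, which is a legitimate route to existence results of this type and is close in spirit to how~\cite{KW2010} proceeds (discrete approximation by confocal ellipsoids, a priori bounds, compactness). Two points deserve care if you flesh it out. First, in the limit $N\to\infty$ you assert convergence of the transport maps $T_N$; but Arzel\`a--Ascoli applied to $\rho_N$ gives only uniform convergence of $\rho_N$, not of its gradient, and $T$ depends on $\DD u$. The standard remedy is to pass to the limit at the level of the pushforward identity $T_\#(f\,\dd S)=g\,\dd S$ in the weak sense, rather than at the level of the pointwise map. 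Second, your reading of hypotheses~(a) and~(b) as ``eccentricity bounds'' is heuristically right, but their precise role is geometric: they guarantee that the approximate reflectors stay away from both the source and the target $\Sigma$, which is what prevents facets from collapsing. Making this quantitative is the genuine work in the a priori estimate, and a one-line appeal to bounded eccentricity does not yet capture it.
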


We first notice that the solution is not unique. If we have a solution for one $\vec{p}\in\mathcal{C}_U$, we know that there exist other solutions for $c\vec{p}$ for each $c> 1$.

Even if we fix $\vec{p}\in\mathcal{C}_U$, there are at least two solutions that contain $\vec{p}$.
A solution is called $R$-convex if it fulfills an \emph{ellipticity constraint}, i.e., the matrix $\textnormal{D}^2 u+\frac{\tilde{a}z_3}{2tx_3}\mathcal{N}$ in \eqref{eq:MA_reflector} is required to be positive definite.
Moreover, a solution is defined to be $R$-concave, if this matrix is negative definite.
In~\cite[Section 7]{KW2010} one can find a sketch of a proof that for a fixed $\vec{p}\in\mathcal{C}_U$ there are exactly one $R$-convex and one $R$-concave solution. Therefore we need at least to fix the size of the reflector by a point $\vec{p}$ and search for either a $R$-convex or a $R$-concave solutions to obtain uniqueness. This is a necessary condition to ensure well-posedness of the problem.

%%%%%%%%%%%%%%%%%%%%

\section{Solving the Monge-Amp\`ere equations}\label{sec:collocation_method}

We now discuss methods to numerically solve equations of Monge-Amp\`ere type, which is particularly difficult due to the strong nonlinearity of this type of equations.

After an overview of numerical solvers for Monge-Amp\`ere type equations in Subsection~\ref{subsec:pde_solver_state_of_the_art} we discuss our approach in Subsection~\ref{subsec:spline_collocation}.
In order to improve the convergence properties and to speed up the solution process, a multilevel technique will be introduced in Subsection~\ref{subsec:splines_nested}.
Since the existence and uniqueness of a solution is guaranteed, if the Monge-Amp\`ere type equation fulfills an ellipticity condition (see, e.g., \cite[Theorem 1.1 and Remarks (i)]{Urbas1998}), we ensure that this condition holds true by adding a convexity constraint to the equation as detailed in Subsection~\ref{subsec:convexity_constraint}. Since the boundary condition \eqref{eq:MA_reflector_second_boundary} cannot be considered directly, the section closes with the presentation of a technique to realize this type of boundary conditions in Subsection~\ref{subsec:reflector_boundary}.

%%%

\subsection{State of the art}\label{subsec:pde_solver_state_of_the_art}

There are several other numerical methods for the solution of Monge-Amp\`ere type equations known, but most of them have clear limitations. Usually the algorithms are only designed to handle boundary value problems for the \emph{Monge-Amp\`ere equation of standard type}
\begin{align}\label{eq:MA_standard}
\det(\DD^2 u(\vec{x}))=f(\vec{x})
\end{align}
for any $\vec{x}\in\Omega\subset\R^2$ with Dirichlet boundary conditions $u(\vec{x})=g(\vec{x})$ for $\vec{x}\in\partial\Omega$.
In particular, in this case the left-hand side determinant may only depend on the Hessian of the solution, but no perturbations in the determinant like $\det(\DD^2 u+A)=f$ for a matrix $A$ are permitted. Numerical methods for this Monge-Amp\`ere equation of standard type can for example be found in~\cite{Awanou2010,BFO2010,BGN+2011,DG2003,DG2004,FN2009,Oberman2008}.

In order to solve the inverse reflector problem we search for a solution of the Monge-Amp\`ere equation given in Theorem~\ref{theorem:MA_reflector}. Since equation \eqref{eq:MA_reflector} is strongly nonlinear with rather cumbersome terms it is very difficult to analyze this equation particularly with regard to weak formulations. To the best knowledge of the authors, there is no closed theory on weak formulations in the classical sense for equations of Monge-Amp\`ere type available.
Hence, Feng and Neilan~\cite{FN2009,FN2009a,FN2011} introduce a new type of weak solution called the moment solution and investigate the following ansatz. They embed strongly nonlinear PDEs into linear PDEs of higher order and study the limit of the vanishing highest order term. For example, the Monge-Amp\`ere equation \eqref{eq:MA_standard} is embedded into an quasilinear elliptic PDE of fourth-order with highest order term $\varepsilon\Delta^2 u$ with $\varepsilon>0$, where the limit $\varepsilon\rightarrow 0$ is studied. Such a scheme is called vanishing moment method; for details we refer to~\cite{FN2009,FN2009a,FN2011}.

Another interesting approach has recently been published by Brenner et al.~\cite{BGN+2011} and by Brenner and Neilan~\cite{BN2012}. They propose a finite element method that leads to a sophisticated consistent discretization and show that the corresponding discrete linearized problem is stable. The main idea is to employ standard continuous Lagrange finite elements instead of finite elements with higher smoothness and to use penalty terms, like those applied in Discontinuous Galerkin methods, to demand for regularity of the solution across interfaces.

In view of practical applications like the inverse reflector problem, it is necessary that a numerical solver efficiently treats Monge-Amp\`ere equations of standard type as well as perturbed equations with Neumann boundary conditions.
At least one of the two methods given by Benamou, Froese, and Oberman~\cite{BFO2010} supports Neumann boundary conditions but is only suited to solve Monge-Amp\`ere equations of standard type. In a subsequent work Froese~\cite{Froese2012} presents a method for Monge-Amp\`ere type equations arising in optimal transportation problems. She uses a Neumann boundary condition and the right-hand side of the Monge-Amp\`ere equation of standard type \eqref{eq:MA_standard} is allowed to also depend on the gradient of $u$. The method by Brenner et al.~\cite{BGN+2011} also permits this dependency on the right-hand side.

A detailed overview of numerical methods for fully nonlinear second order PDEs including methods for Monge-Amp\`ere type equations can be found in the review article~\cite{FGN2013} by Feng, Glowinski, and Neilan.

%%%

\subsection{Spline collocation method}\label{subsec:spline_collocation}

We now explore a different simple but very flexible approach for the solution of the Monge-Amp\`ere type equations: The \emph{collocation method} can directly be applied to the strong formulation such as \eqref{eq:MA_reflector}.

The solution is approximated in a finite-dimensional trial space, in our case we choose the space of spline functions because of its good approximation properties. The spline space is spanned by B-spline functions which form an advantageous basis due to its flexible manageability. Moreover this basis is well-known to be numerically very stable and the functions are of minimal support, which favors sparsity in the collocation matrices.

The rest of this subsection is arranged as follows: First we formulate the collocation method in Subsection~\ref{subsec:collocation}. The trial space and a modified B-spline basis that is suited for our particular choice of the collocation points are set up in one spatial dimension in Subsections~\ref{subsec:b-splines} and~\ref{subsec:modified_b-splines}. Finally, the trial space is extended to the two-dimensional case in Subsection~\ref{subsec:tensor_product} via a tensor construction.

\subsubsection{Collocation}\label{subsec:collocation}

Let us now formulate the collocation method for a general second order PDE and therefore introduce some notation. Let $\Omega := (a,b)\times (c,d)\subset\R^2$ be a rectangular domain and let the boundary value problem be given as
\begin{align}\label{eq:general_pde}
  F(x,y,u(x,y),\DD u,\DD^2 u) &= 0, && \text{for }(x,y)^T\in\Omega,\\
  \label{eq:general_boundary}
  G(x,y,u(x,y),\DD u) &= 0, && \text{for }(x,y)^T\in\partial\Omega.
\end{align}
We approximate the exact solution in a finite-dimensional trial subspace, say of dimension $n\in\N$, spanned by some basis functions $B_{1},...,B_{n}\in C^2(\Omega)$. Then the approximate solution is written as
$\hat{u}(x,y) := \sum_{i=1}^{n}c_{i}B_{i}(x,y)$,
where $(x,y)^T\in \bar{\Omega}$ and $c_1,...,c_n$ are the desired coefficients.

Of course we cannot expect such a discrete solution to fulfill the PDE exactly in the whole domain $\Omega$. The idea of the collocation method is that the PDE should be fulfilled pointwise at certain \emph{collocation points}.
Therefore we choose $n$ appropriate pairwise different collocation points, i.e., we select two finite and nonempty subsets $\hat{\Omega}\subset\Omega$ and $\partial\hat{\Omega}\subset \partial\Omega$ with cardinality $|\hat{\Omega}\cup\partial\hat{\Omega}|=n$. Our problem \eqref{eq:general_pde}, \eqref{eq:general_boundary} is then required to be fulfilled exactly at these points, i.e., we end up with a discrete problem which is the nonlinear system of equations
\begin{align}\label{eq:general_pde_discrete}
  F(\tau,\mu,\hat{u}(\tau,\mu),\DD \hat{u},\DD^2 \hat{u}) &= 0, && \text{for }(\tau,\mu)^T\in\hat{\Omega},\\
  \label{eq:general_boundary_discrete}
  G(\tau,\mu,\hat{u}(\tau,\mu),\DD \hat{u}) &= 0, && \text{for }(\tau,\mu)^T\in\partial\hat{\Omega}.
\end{align}

Now we can use some \emph{Newton-type method} to solve \eqref{eq:general_pde_discrete}. For reasons of better stability we favor the application of the \emph{double-dogleg method} (see, e.g., \cite{DS1983}), which is a trust region algorithm of \emph{quasi-Newton type}. We use the variant proposed by Dennis and Mei~\cite{DM1979}, where we invoke an algorithm by Nielsen~\cite{Nielsen1999} for the choice of the trust region radius after each iteration step. Adequate stopping criteria for the iteration process can be found in~\cite{MNT2004}.

Next we will discuss the choice of appropriate basis function and collocation points.

\subsubsection{B-splines}\label{subsec:b-splines}

Let us briefly recall the definition of \emph{B-splines} on the real line, which are the fundament of our basis functions, using the notation of~\cite{Dahmen1998}.

\begin{definition}
Let $[a,b]\subset\R$ be a given interval that serves as the domain of the basis functions. For B-splines of order $n\in\N$ we define for a fixed $N\in\N$ a strictly increasing \emph{knot sequence} $T=\{t_i\}_{i=1}^{N+n}\subset \R$ with
\begin{align}\label{eq:strictly_increasing_knot_sequence}
  t_1<...<t_{n}:=a<...<t_{N+1}:=b<...<t_{N+n}.
\end{align}
For $i=1,...,N$ the $i$-th B-spline $N_{i,n}$ of order $n$ is then defined by the \emph{Cox-de Boor recursion formula}
\begin{align*}
  N_{i,1}(x) &:= \chi_{[t_i,t_{i+1})}(x):=
    \begin{cases}
      1,&\text{if }x\in [t_i,t_{i+1}),\\
      0,&\text{otherwise,}
    \end{cases}
             && \text{if }n=1\text{ and}\\
  N_{i,n}(x) &:= \frac{x-t_i}{t_{i+n-1}-t_i}N_{i,n-1}(x)
                +\frac{t_{i+n}-x}{t_{i+n}-t_{i+1}}N_{i+1,n-1}(x)
   && \text{if }n\geq 2.
\end{align*}
\end{definition}

\begin{remark}
The condition of a strictly increasing knot sequence \eqref{eq:strictly_increasing_knot_sequence}
can be relaxed using de l'H\^opital's rule and \emph{multiple knots} can be permitted; see, e.g., \cite[Chapter IX]{DeBoor1978} or~\cite[Section 2.2]{PT1997}.
\end{remark}

In the following we fix the outer knots in \eqref{eq:strictly_increasing_knot_sequence} at the boundary, i.e., we set $t_1:=...:=t_{n-1}:=a$ and $t_{N+2}:=...:=t_{N+n}:=b$. Multiple knots for the interior knots $t_n,...,t_{N+1}$ result in less smooth B-splines. Since we want to solve a PDE of second order, we need basis functions which are at least twice differentiable. To have minimal computational efforts while fulfilling this constraint we choose the lowest possible order, which is $n=4$, i.e., \emph{cubic splines}, and avoid multiple knots inside $(a,b)$.

\begin{remark}\label{rem:BSplinesVanishingDerivatives}
Since the left outer knots all coincide with the left boundary $a$, $N_{1,n}$ is the only B-spline with a non-zero function value at the left boundary point $a$. Moreover, the first derivative of $N_{j,n}$ vanishes at $a$ for all $j>2$ and the second derivative of $N_{j,n}$ vanishes at $a$ for all $j>3$.
By symmetry, the same holds for the B-splines at the right boundary point $b$.
\end{remark}

\subsubsection{Collocation points and modification of the B-spline basis}\label{subsec:modified_b-splines}

In order to uniquely define a spline from our spline space it suffices to set $N$ linear independent conditions, e.g., to prescribe the function values at $N$ appropriate different nodes. The knots themselves are possibly a good choice for these nodes. However, we only have $N-2$ knots in $[a,b]$, that is there are two open degrees of freedom left.
There are different ways to handle open degrees of freedom, e.g., by setting a not-a-knot condition~\cite[Chapter IV]{DeBoor1978}.
Another possibility is to modify the trial space by lowering the dimension of the spline space, which we will discuss next.

For our purpose it is crucial to keep the approximation properties of the trial space. Therefore it is necessary that each (Taylor-)polynomial of degree $\leq n-1$ can still be reproduced and consequently at least $n$ basis functions must be supported in each subinterval. With regard to the boundary conditions and the clearness of the construction of the modified basis, we build new basis functions at the boundary of the interval $[a,b]$. Without loss of generality we can restrict ourselves to the left boundary case, the right boundary case is handled analogously by symmetry.

In order to modify as few basis functions as possible we choose four new basis functions from the span of the first five B-splines, i.e., we write
\begin{align*}
  B_{i,4}&:=
  \begin{cases}
    \sum_{j=1}^5a_{i,j}N_{j,4}     & \text{for }i=1,...,4,\\
    N_{i+1,4}                      & \text{for }i=5,...,N-6,\\
    \sum_{j=N-4}^{N}a_{i,j}N_{j,4} & \text{for }i=N-5,...,N-2
  \end{cases}
\end{align*}
for some coefficients $a_{i,j}\in\R$, which still have to be determined. Note that this approach of gluing B-splines at the boundary is similar to the procedure used in the construction of WEB-splines~\cite{HRW2001} where some B-splines are glued to improve the numerical stability of the basis, i.e., to lower its condition number.

From \emph{Marsden's identity} we can derive
\begin{align}\label{eq:monomials} x^m=\sum_{j=1}^{N}\frac{(-1)^{n-1-m}\psi_{j,n}^{(n-1-m)}(0)}{(n-1)\cdot...\cdot(m+1)}N_{j,n}(x)
  \quad\text{with}\quad
  \psi_{j,n}(y):=\prod_{l=1}^{n-1}(t_{j+l}-y)
\end{align}
for $m=0,...,n-1$; see, e.g., \cite[(2.3.3)]{Dahmen1998}.

Furthermore the coefficient matrix $A :=[a_{i,j}]_{i=1, \dots, 4,\, j=1, \dots ,5}$ has to be chosen in such a way, that for each $m\in\{0,1,2,3\}$ there exist coefficients $\{c_{i,m}\}_{i=1}^{N-2}$ so that
\begin{align}
  \notag
  x^m &= \sum_{i=1}^{N}c_{i,m}B_{i,4}(x)\\
  \label{eq:spline_old}
  &=
  \sum_{j=1}^5\left(\sum_{i=1}^{4}c_{i,m}a_{i,j}\right)N_{j,4}+
  \sum_{i=6}^{N-5}c_{i,m}N_{i+1,4}+
  \sum_{j=N-4}^{N}\left(\sum_{i=N-5}^{N-2}c_{i,m}a_{i,j}\right)N_{j,4}
\end{align}
holds.
Setting $C:=[c_{i,m}]_{1\leq i\leq 4,\, 0\leq m\leq 3}$ and
equating the coefficients of $N_{j,4}$ for $j=1,...,5$ in \eqref{eq:monomials} and \eqref{eq:spline_old} yields the system of linear equations
\begin{align}\label{eq:linear_equations}
  A^T C = B
\end{align}
where $B:=[b_{j,m}]_{1\leq j\leq 5,\, 0\leq m\leq 3}$ with
$b_{j,m}:=\frac{(-1)^{n-1-m}\psi_{j,n}^{(n-1-m)}(0)}{(n-1)\cdot...\cdot(m+1)}$.
Now we have to choose the matrix $A$, such that \eqref{eq:linear_equations} has a solution matrix $C$.

The trivial solution is $A:=B^T$ and $C:=I$, where $I \in \R^{4 \times 4}$ is the identity matrix. In other words the new basis elements then are exactly the monomials $1$, $x$, $x^2$, and $x^3$ at the first subinterval. The disadvantage of this solution is, that the new basis functions are not shift invariant, i.e., they depend on the location of the knots in the knot sequence $T$. Therefore we will search for a more favorable solution of shift invariant functions.

For this purpose let us choose the ansatz $(BP)P^{-1}=B$ for an invertible matrix $P\in\R^{4\times 4}$ and set $A:=(BP)^T$ and $C:=P^{-1}$. Thus the idea is to define the matrix $A^T$ using appropriate column operations acting on $B$, such that $A^T$ has a preferably simple form.

Of course the choice of $P$ is not unique, such that we can prescribe additional conditions, e.g., in the spirit of the properties of the B-spline functions observed in Remark~\ref{rem:BSplinesVanishingDerivatives}. For Dirichlet boundary conditions it is advantageous, if only one basis function is nonzero at the boundary. The same holds true for Neumann boundary conditions when the first derivative of the basis functions is considered. Moreover, for natural spline interpolation (see, e.g., \cite[Chapter IV]{DeBoor1978}) it is convenient if only one basis function has a non-vanishing second derivative at the boundary. The following choice provides us with a basis which has all these handy properties.

Suppose that we have an equidistant knot sequence with multiple knots at the boundary, i.e., $T=\{t_i\}_{i=1}^{N+4} \subset [a,b] \subset \R$ with
\begin{align}\label{eq:open_knots}
  t_i &:= \begin{cases}
            a, & \text{if }1\leq i\leq 4,\\
            a + \frac{b-a}{N-3}(i-4), & \text{if }5\leq i \leq N,\\
            b, & \text{if }N+1 \leq i \leq N+4.
          \end{cases}
\end{align}
After some elementary computations for this particular case, we can determine an invertible matrix $P$ such that
\begin{align}\label{eq:matrixAnew}
  A = (BP)^T =
  \left[\begin{matrix}
    1          & 0             & 0           & 0           \\[0.3em]
    1          & \frac{1}{4}   & 0           & 0           \\[0.3em]
    1          & \frac{3}{4}   & \frac{1}{2} & 0           \\[0.3em]
    \frac{3}{4}& \frac{15}{16} & \frac{3}{4} & \frac{1}{4} \\[0.3em]
    0          & 0             & 0           & 1
  \end{matrix}\right]^T.
\end{align}
Note that the matrix $A$ is independent of the knots, such that our new basis functions inherit the shift invariance from the B-splines. A plot of the resulting four new basis functions is given in Figure~\ref{fig:modified_bsplines}.

\begin{figure}[ht]
  \centering
  \includegraphics{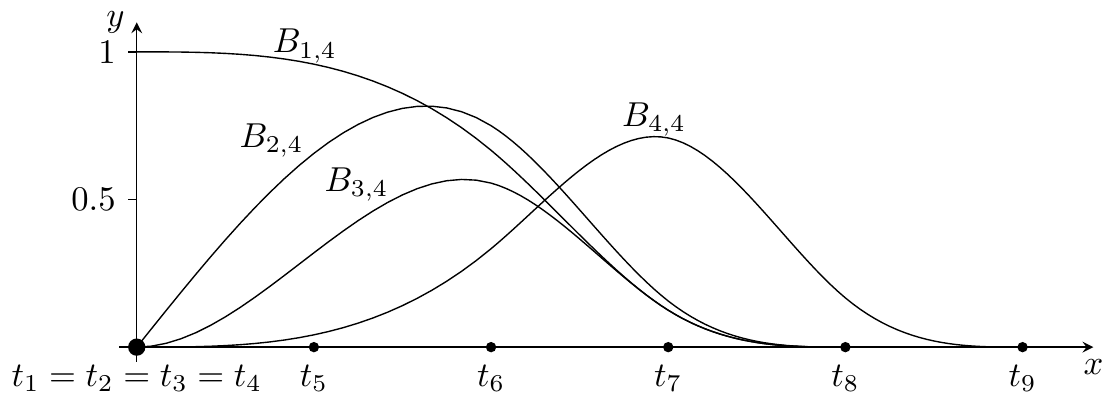}
  \caption{Modified basis functions given by $A$ in \eqref{eq:matrixAnew}.}
  \label{fig:modified_bsplines}
\end{figure}

\begin{remark}
Due to the conditions imposed and the properties pointed out in Remark~\ref{rem:BSplinesVanishingDerivatives}, the matrix $A^T$ in \eqref{eq:matrixAnew} has a lower triangular structure.
\end{remark}

\subsubsection{Tensor-product B-spline basis}\label{subsec:tensor_product}

Since we aim for solving equations of Monge-Amp\`ere type on a two-dimensional rectangle $R=[a,b]\times [c,d]\subset\R^2$, we next define the spline space on $R$ using the usual tensor product construction. Let $T_1$ be the knot sequence as in \eqref{eq:open_knots} for the interval $[a,b]$ and $T_2$ the analog knot sequence for $[c,d]$. As in Section~\ref{subsec:b-splines} for both $T_1$ and $T_2$ we define B-splines of order $n_1$ and $n_2$, respectively.
We then define \emph{tensor-product B-splines} by
\begin{align*}
  N_{i,j;n_1,n_2}(x,y) &:= N_{i,n_1}(x) \, N_{j,n_2}(y)
\end{align*}
for $i=1,...,N_1$, $j=1,...,N_2$ and $(x,y)\in [a,b]\times [c,d]$.

Using the same arguments as in Subsection~\ref{subsec:b-splines} we set $n_1=n_2=4$ and restrict ourselves to knot sequences that are equidistant in the inner of the intervals. As ansatz functions in our collocation method in Subsection~\ref{subsec:collocation} we use the modified tensor-product basis functions
\begin{align*}
  B_{i,j;4}(x,y) &:= B_{i,4}(x) \, B_{j,4}(y)
\end{align*}
for $i=1,...,N_1-2$ and $j=1,...,N_2-2$ and, as in~\cite[Chapter XIII]{DeBoor1978}, the collocation points
\begin{align*}
  \hat{\Omega}
    &:=\{(\tau,\mu)^T\in\Omega\,:\,\tau\in T_1,\,\mu\in T_2\}
      \subset\Omega,\\
  \partial\hat{\Omega}
    &:=\{(\tau,\mu)^T\in\partial\Omega\,:\,\tau\in T_1,\,\mu\in T_2\}
      \subset\partial\Omega,
\end{align*}
which are the knots of the B-splines.

%%%

\subsection{Nested iteration}\label{subsec:splines_nested}

Splines are particularly suitable for multilevel strategies, because of the following property: Suppose we have a knot sequence $\hat{T}:=\{\hat{t}_i\}_{i=1}^{M+n} \subset \R$ for the interval $[a,b]\subset\R$ as in \eqref{eq:open_knots} and a second knot sequence $T:=\{t_i\}_{i=1}^{N+n}$ for $[a,b] \subset \R$ that is obtained from $\hat{T}$ by inserting new knots. Then the corresponding spline spaces $S_n(\hat{T})$ and $S_n(T)$ of order $n\in\N$ are nested, i.e., $S_n(\hat{T})$ is a subspace of $S_n(T)$.

Since we use a Newton-type method for solving the discrete nonlinear problem \eqref{eq:general_pde_discrete}, \eqref{eq:general_boundary_discrete} we need an initial guess that is preferably close to the solution. Otherwise solving the problem could be very time consuming or even infeasible, because the Newton-type method might not converge.
We therefore follow an approach based on \emph{nested iteration}: Our computation starts on a very coarse grid. After calculating the solution for this coarse problem, we refine the mesh by knot insertion, e.g., we halve the mesh size in each coordinate direction.
Since the spline spaces of the coarse and refined knot sequences are nested, the coarse solution is also contained in the spline space corresponding to the finer mesh and can be used as an initial guess. We continue with this process until we reach the desired grid resolution.

To be more precise, assume we have $N=N_1=N_2$, i.e., the number of knots is the same in each coordinate direction. Let us now solve the discrete nonlinear problem on a grid with $N^*$ knots in each direction. We start with the coarsest reasonable grid possible in our situation such that the boundary basis functions do not overlap, which is of size $N^0:=11$, and after each iteration we halve the mesh size. Then after nested iteration $k\in\N$ there are $N^k:=2N^{k-1}-1$ knots in the grid in each coordinate direction. If $N^{k+1}\geq N^*$ we interpolate the solution obtained from the $k$-th iteration to the fine grid with $N^*\times N^*$ knots using spline interpolation and solve the final nonlinear problem.

%%%

\subsection{Convexity constraint for Monge-Amp\`ere equations}\label{subsec:convexity_constraint}

In order to show the existence and the uniqueness of solutions of Monge-Amp\`ere type equations, it is often required that the equation is elliptic with respect to the solution; see, e.g., \cite[Theorem 1.1 and Remarks (i)]{Urbas1998} or~\cite{TW2009}.

A nonlinear PDE $F[u]=F(\cdot,u,\DD u,\DD^2u)=0$ is said to be \emph{elliptic}, if the matrix
\begin{align}\label{eq:proof_ellipticity}
  \left[\frac{\partial F}{\partial
  r_{ij}}(\gamma)\right]_{i,j}:=\left[\begin{array}{ccc}
  \frac{\partial F}{\partial r_{11}}(\gamma)&...& \frac{\partial F}{\partial r_{1n}}(\gamma)\\
  \vdots&&\vdots\\
  \frac{\partial F}{\partial r_{n1}}(\gamma)&...& \frac{\partial F}{\partial r_{nn}}(\gamma)
\end{array}\right]
\end{align}
is positive definite for all $\gamma=(x,z,p,r)\in V\subset \Omega\times\R\times \R^n\times \tilde{\R}^{n\times n}$, where $\tilde{\R}^{n\times n}$ denotes the space of symmetric $n\times n$ matrices; see, e.g., \cite[Chapter 17]{GT1983}.

In case of a general Monge-Amp\`ere equation
\begin{align}\label{eq:MA_general}
  \det \left(\DD^2 u + A(\cdot,u,\DD u)\right) &= f(\cdot,u,\DD u)
  \qquad\textnormal{ in } \Omega
\end{align}
with appropriate boundary conditions we have the following result.

\begin{lemma}
  Let $\Omega\subset\R^n$ be a domain, $u\in C^2(\Omega)$, and let $A$ be a symmetric $n\times n$ matrix. It is necessary and sufficient to ensure ellipticity for \eqref{eq:MA_general} that
  \begin{align}\label{eq:convexity_general}
    \DD^2 u + A(\cdot,u,\DD u)\text{ is positive definite}.
  \end{align}
  Consequently the right-hand side $f$ has to be positive to allow elliptic solutions.
\end{lemma}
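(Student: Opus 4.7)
The plan is to compute the matrix \eqref{eq:proof_ellipticity} explicitly for the Monge-Amp\`ere operator $F[u] := \det(\DD^2 u + A(\cdot,u,\DD u)) - f(\cdot,u,\DD u)$ and then to characterize when this Jacobian is positive definite. Since $A$ and $f$ depend only on the arguments $(x,z,p)$ and not on the Hessian variable $r$, the derivatives $\partial F/\partial r_{ij}$ act only on the determinant. Setting $M := \DD^2 u + A$, the chain rule combined with $\partial M_{kl}/\partial r_{ij}=\delta_{ki}\delta_{lj}$ and Jacobi's formula for the derivative of a determinant yields
\begin{align*}
  \left[\frac{\partial F}{\partial r_{ij}}\right]_{i,j} = \mathrm{cof}(M) = (\det M)\,M^{-1},
\end{align*}
where the second equality uses the identity $M\,\mathrm{cof}(M)^T=(\det M)\,I$ together with the symmetry of $M$ (so that $\mathrm{cof}(M)$ is itself symmetric).

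The task thus reduces to showing that, for a symmetric invertible matrix $M$, the cofactor matrix $(\det M)\,M^{-1}$ is positive definite if and only if $M$ is positive definite. One direction is immediate: if $M$ is positive definite, then all its eigenvalues are positive, so $\det M > 0$ and $M^{-1}$ is positive definite, hence the product is positive definite. For the converse I would diagonalise $M$ orthogonally; if $\lambda_1,\dots,\lambda_n$ are the eigenvalues of $M$, those of $(\det M)\,M^{-1}$ are $e_i := \prod_{k\neq i}\lambda_k$. From the identity $\lambda_i\,e_i = \det M$ and the sign constraint $e_i > 0$, the eigenvalues $\lambda_i$ inherit a common sign; in the case $n=2$ relevant to the reflector problem, $e_i=\lambda_{3-i}$, so the positive definiteness of the cofactor matrix directly forces $\lambda_1,\lambda_2>0$.

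Necessity of positivity of $f$ is then an immediate byproduct: positive definiteness of $M$ implies $\det M>0$, while on a solution \eqref{eq:MA_general} reduces to $f = \det M$. I do not anticipate a significant obstacle; the only point that requires care is that the derivative of the determinant is taken componentwise, treating the entries of $r$ as independent variables, which is the convention underlying the notion of ellipticity used in, e.g., \cite{GT1983}.
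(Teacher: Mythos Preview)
Your argument is correct and follows the same route as the paper: both compute the ellipticity matrix \eqref{eq:proof_ellipticity} as the cofactor matrix of $M=\DD^2u+A$ (you cite Jacobi's formula, the paper carries out the Laplace expansion explicitly) and then use $\mathrm{cof}(M)=(\det M)\,M^{-1}$ to link positive definiteness of the cofactor matrix to that of $M$. Your eigenvalue discussion of the converse is actually more careful than the paper's bare assertion, and your observation that for general $n$ the cofactor condition only forces a common sign of the $\lambda_i$---so that one must specialise to $n=2$ to conclude positivity---is a valid caveat that the paper glosses over.
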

\begin{proof}
  For $A\equiv 0$ this result has been proven, e.g., in~\cite[Lemma 1]{FO2011}. Otherwise, the result is also well-known (see, e.g., \cite{TW2009}), but the authors could not find a proof in the literature. We therefore extend the proof given by Froese and Oberman~\cite[Lemma 1]{FO2011} to this more general result.

  Let $\tilde{\mathcal{W}}$ be the cofactor matrix of the symmetric matrix $\mathcal{W}:=\DD^2u+A$; see, e.g., \cite[Section 4.3]{Strang1980} for a definition.
  As a consequence of Cramer's rule we have  $\mathcal{W}^{-1}\det(\mathcal{W})=\tilde{\mathcal{W}}^T$. It follows that $\tilde{\mathcal{W}}$ is positive definite if and only if $\mathcal{W}$ is positive definite. Therefore we only have to prove that
  \begin{align}\label{eq:proof_determinant}
    \DD_r\det(\mathcal{W})
      :=\left[\frac{\partial\det(\mathcal{W})}{\partial (\DD^2 u)_{i,j}}\right]_{i,j}
      =\tilde{\mathcal{W}}
  \end{align}
  is true, which is an alternative expression of \eqref{eq:proof_ellipticity} for the Monge-Amp\`ere equation \eqref{eq:MA_general}.

  Expanding the determinant along the $i$-th row using Laplace's formula yields
  \begin{align*}
    \det(\mathcal{W})
    = \sum_{j=0}^n \tilde{\mathcal{W}}_{i,j}\mathcal{W}_{i,j}
    = \sum_{j=0}^n \tilde{\mathcal{W}}_{i,j}((\DD^2u)_{i,j}+A_{i,j}).
  \end{align*}
  By definition the cofactor $\tilde{\mathcal{W}}_{i,j}$ is independent of $\mathcal{W}_{i,j}$ and therefore of $(\DD^2 u)_{i,j}$ as well. In addition, the matrix $A$ is independent of $\DD^2 u$, such that we have
  \begin{align*}
    \frac{\partial\det(\mathcal{W})}{\partial (\DD^2 u)_{i,j}}=\tilde{\mathcal{W}}_{i,j}.
  \end{align*}
  We therefore proved that \eqref{eq:proof_ellipticity} for the Monge-Amp\`ere equation \eqref{eq:MA_general} is positive definite if and only if $\DD^2u+A$ is positive definite.
\end{proof}

\begin{remark}
In case that $A\equiv 0$, the Hessian matrix $\textnormal{D}^2u$ must be positive definite to ensure ellipticity for \eqref{eq:MA_general}, which means that $u$ has to be strictly convex. Therefore condition \eqref{eq:convexity_general} can be viewed as some kind of convexity condition.
\end{remark}

As proposed in~\cite{FO2011,FO2011a} we take \eqref{eq:convexity_general} into account as an additional constraint. Note that the positive definiteness of a symmetric matrix from $\R^{2 \times 2}$ is equivalent to a positive determinant and positive diagonal entries. The positivity of the determinant is guaranteed if $f>0$. To forbid solutions with non-positive diagonal entries in the matrix \eqref{eq:convexity_general} we define the modified determinant
\begin{align*}
  {\det}^+(\mathcal{W})
    &:= \max\{0,\mathcal{W}_{1,1}\}\max\{0,\mathcal{W}_{2,2}\} - \mathcal{W}_{1,2}^2,
\end{align*}
which is non-positive if at least one diagonal entry is non-positive and otherwise equals $\det(\mathcal{W})$. This idea is similar to the modified determinant in~\cite{FO2011,FO2011a} and also in~\cite[Section 4.3]{Froese2012}.

To avoid problems, such as non-uniqueness of a solution in case of a singularity, where $\DD^2u+A$ has an eigenvalue equal to zero, we introduce a parameter $\lambda\geq 0$ and subtract a penalty term
\begin{align}\label{eq:modified_det}
  {\det}^+_{\lambda}\mathcal{W}
    &:= {\det}^+\mathcal{W} - \lambda\left[(\min\{0,\mathcal{W}_{1,1}\})^2 + (\min\{0,\mathcal{W}_{2,2}\})^2\right]
\end{align}
to ensure that the modified determinant has a negative value instead of just a non-positive value. This was done similarly by Froese~\cite[Section 4.3]{Froese2012}.
Now we replace the determinant in \eqref{eq:MA_general}, consider the equation
\begin{align}\label{eq:MA_convexity}
  {\det}^+_{\lambda} \left(\DD^2 u + A(\cdot,u,\DD u)\right) = f(\cdot,u,\DD u)
\end{align}
instead, and obtain the following result.

\begin{lemma}
  Let $\Omega\subset\R^2$ be a domain, $u\in C^2(\Omega)$, and $f>0$. If $u$ is a classical solution of the modified Monge-Amp\`ere equation \eqref{eq:MA_convexity}, then $u$ also solves the original equation \eqref{eq:MA_general} and simultaneously fulfills the ellipticity constraint \eqref{eq:convexity_general} and vice versa.
\end{lemma}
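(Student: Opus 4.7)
The plan is to prove the equivalence by case analysis on the signs of the diagonal entries of $\mathcal{W}:=\DD^2 u + A(\cdot,u,\DD u)$. The key observation is that the construction of ${\det}^+_\lambda$ is designed so that it coincides with the usual determinant exactly when the matrix has positive diagonal entries, and is strictly non-positive otherwise, which combined with $f>0$ will force the positive-definiteness.

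For the direction from \eqref{eq:MA_convexity} to \eqref{eq:MA_general} together with \eqref{eq:convexity_general}, I would first argue by contradiction that the diagonal entries $\mathcal{W}_{1,1}$ and $\mathcal{W}_{2,2}$ must both be strictly positive. Suppose, for example, that $\mathcal{W}_{1,1}\leq 0$. Then by definition
\begin{align*}
  {\det}^+_\lambda \mathcal{W}
  &= \max\{0,\mathcal{W}_{1,1}\}\max\{0,\mathcal{W}_{2,2}\} - \mathcal{W}_{1,2}^2
     - \lambda\bigl[(\min\{0,\mathcal{W}_{1,1}\})^2 + (\min\{0,\mathcal{W}_{2,2}\})^2\bigr]\\
  &= -\mathcal{W}_{1,2}^2 - \lambda\mathcal{W}_{1,1}^2
     - \lambda(\min\{0,\mathcal{W}_{2,2}\})^2 \;\leq\; 0,
\end{align*}
contradicting ${\det}^+_\lambda\mathcal{W} = f > 0$; the symmetric argument rules out $\mathcal{W}_{2,2}\leq 0$. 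Once both diagonal entries are positive, the $\max$-terms coincide with $\mathcal{W}_{i,i}$ and both $\min$-terms vanish, so ${\det}^+_\lambda\mathcal{W}=\mathcal{W}_{1,1}\mathcal{W}_{2,2}-\mathcal{W}_{1,2}^2 = \det\mathcal{W}$. Thus $\det\mathcal{W}=f$, i.e., equation \eqref{eq:MA_general} holds. Together with $\mathcal{W}_{1,1}>0$ and $\det\mathcal{W}=f>0$, Sylvester's criterion yields positive-definiteness of $\mathcal{W}$, which is exactly the ellipticity condition \eqref{eq:convexity_general}.

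For the converse, suppose $u$ solves \eqref{eq:MA_general} and $\mathcal{W}$ is positive definite. Then the diagonal entries of $\mathcal{W}$, being quadratic forms evaluated on the coordinate vectors, are strictly positive. Consequently $\max\{0,\mathcal{W}_{i,i}\}=\mathcal{W}_{i,i}$ and $\min\{0,\mathcal{W}_{i,i}\}=0$ for $i=1,2$, so the penalty term vanishes and ${\det}^+_\lambda\mathcal{W}$ collapses to the ordinary determinant. Hence ${\det}^+_\lambda\mathcal{W}=\det\mathcal{W}=f$, proving \eqref{eq:MA_convexity}.

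There is not really a hard step here; the argument is essentially a careful bookkeeping of the piecewise definitions of $\max$ and $\min$. The only point requiring attention is ensuring that the boundary case $\mathcal{W}_{i,i}=0$ is handled (it is swept into the $\leq 0$ estimate above), and that $\lambda\geq 0$ is sufficient — one does not need $\lambda>0$ for this equivalence, the penalty is used elsewhere to rule out singular solutions with a vanishing eigenvalue on the boundary between the two regimes.
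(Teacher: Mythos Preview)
Your proof is correct and follows essentially the same approach as the paper's own proof: both argue by contradiction that positivity of ${\det}^+_\lambda\mathcal{W}$ forces the diagonal entries of $\mathcal{W}$ to be positive, whence the modified and ordinary determinants agree, and then use the $2\times 2$ Sylvester criterion for the ellipticity constraint. Your version simply spells out the contradiction computation and the Sylvester step more explicitly than the paper, which states them tersely.
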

\begin{proof}
  Let $u\in C^2(\Omega)$ be a classical solution of \eqref{eq:MA_convexity} and $\mathcal{W}:=\DD^2 u + A(\cdot,u,\DD u)$. Since ${\det}^+_{\lambda}\mathcal{W}=f>0$, the diagonal entries of $\mathcal{W}$ are positive because otherwise we would have ${\det}^+_{\lambda}\mathcal{W}\leq 0 < f$. Therefore $f={\det}^+_{\lambda}\mathcal{W} = \det \mathcal{W}$ holds true. It immediately follows, that $u$ also solves \eqref{eq:MA_general} and \eqref{eq:convexity_general} simultaneously.

  Conversely, if $u$ solves \eqref{eq:MA_general} and \eqref{eq:convexity_general}, the diagonal entries of $\mathcal{W}$ are positive and we have $f=\det \mathcal{W}={\det}^+_{\lambda}\mathcal{W}$.
\end{proof}

%%%

\subsection{Boundary conditions for the inverse reflector problem}\label{subsec:reflector_boundary}

Let us now come back to the solution of the inverse reflector problem. The equation of Monge-Amp\`ere type \eqref{eq:MA_reflector} in Theorem~\ref{theorem:MA_reflector} already is of the desired form \eqref{eq:general_pde} to be treated by the collocation method. But this is not true for the boundary condition \eqref{eq:MA_reflector_second_boundary}, because it is a constraint for the desired mapping $T$ on the whole domain $U$ and not only for $T$ restricted to the boundary of $U$, i.e., for $T|_{\partial U}$. Thus condition \eqref{eq:MA_reflector_second_boundary} is not of the general form \eqref{eq:general_boundary}, which makes it difficult to handle. To overcome this problem we first assume that the boundary of $U$ is supposed to be mapped by $T$ onto the boundary of $\Sigma$ and the interior of $U$ to the interior of $\Sigma$. To the best of the authors' knowledge this assumption has not yet been proven to hold for the inverse reflector problem.
But it is worthwhile noting that when the mirror surface is interpreted as an extended light source this assumption corresponds to the \emph{edge ray principle} from nonimaging optics~\cite[Chapter 3]{WMB2005}. In brief, it states that when the rays emitted at the boundary of the light source, the edge rays, are mapped to the boundary of the target it is ensured that all other rays emitted by the light source are also mapped to the target, i.e. energy conservation holds.

However, Froese~\cite{Froese2012} discusses a simpler but related problem where a similar assumption to \eqref{eq:MA_reflector_second_boundary} holds true and she proposes to replace this assumption by a simpler boundary condition. Therefore we follow her strategy proposed in~\cite[Section 3.3]{Froese2012} to render our boundary condition manageable.

The idea is to first replace the constraint \eqref{eq:MA_reflector_second_boundary}, i.e., $T(U)=\Sigma$, by $T(\partial U)=\partial\Sigma$. Since we work on $\Omega$, which is isomorphic to $U$, we write $T(\partial \Omega)=\partial\Sigma$. We then only need to prescribe the normal component of the mapping $T|_{\partial \Omega}$, such that we obtain the boundary condition
\begin{align}\label{eq:bc_unknown_phi}
  T(\cdot,u,\DD u)^T\nu(\cdot)&=\phi(\cdot) \qquad\text{in }\partial\Omega,
\end{align}
where $\nu$ is the outer normal vector of $\partial\Omega$ and the normal component $\phi$ of $T$ is an a priori unknown function.

Unfortunately, we now have a circular dependency problem.
If we knew $\phi$, we would have the boundary condition in the desired form \eqref{eq:general_pde}. Of course if the solution $u^*$ of the problem and therefore the correct mapping $T^*$ is known, then $\phi(\vec{x})=T^*(\vec{x},u^*,\DD u^*)^T\nu(\vec{x})$. But the function $\phi$ is unknown unless the solution of Problem~\ref{problem:IR} is solved, where the boundary condition is needed to identify the solution.

In order to disrupt this circular dependency problem, a \emph{Picard-type iteration} is proposed in~\cite{Froese2012} for a similar but different problem:
We iterate and start with an initial guess $\phi^0$. For $k=1,2,...$ we determine $\phi^k$ by first solving the Monge-Amp\`ere equation \eqref{eq:MA_reflector} with boundary condition \eqref{eq:bc_unknown_phi} using $\phi^{k-1}$ instead of $\phi$. The solution $u^{k-1}$ defines a reflector mapping $T^{k-1}:=T(\cdot,u^{k-1},\DD u^{k-1})$, which not necessarily maps $\partial\Omega$ onto $\partial\Sigma$ but onto $\partial\Sigma^{k-1}$ for the image $\Sigma^{k-1}$ of the mapping $T^{k-1}$, which in general differs from $\Sigma$.
In order to correct this we apply the orthogonal projection of $\partial\Sigma^{k-1}$ onto $\partial\Sigma$ using the standard scalar product and define
\begin{align}\label{eq:phi_k}
  \phi^k(\vec{x})
    := \left[\underset{\vec{z}\in\partial\Sigma}{\arg\min}\,
        \normlr{ \vec{z} - T^{k-1}(\vec{x},u^{k-1},\DD u^{k-1}) }_2^2\right]\nu(\vec{x})
    \qquad\text{for }\vec{x}\in\partial\Omega.
\end{align}
The boundary condition then reads
\begin{align*}
  \left[T(\vec{x},u^k,\DD u^k)
      - \underset{\vec{z}\in\partial\Sigma}{\arg\min}\,
        \normlr{ \vec{z} - T(\vec{x},u^{k-1},\DD u^{k-1}) }_2^2
  \right]^T\nu(\cdot) = 0
    \qquad\text{for }\vec{x}\in\partial\Omega,
\end{align*}
where $u^{k-1}$ is known and $u^k$ is the desired interim solution in step $k$.

\paragraph{Existence}
For the solution of the inverse reflector problem we need to ensure the conservation of energy \eqref{eq:energy_conservation}. In order to obtain the correct $\phi$, we solve the reflector problem for $\phi^k$ and therefore for a different target $\Sigma^k$ for $k=1,2,...$ in the Picard-type iteration. The energy conservation then holds for $\Sigma^k$. Note that the prescribed density function $g$ on the target $\Sigma$ of Problem~\ref{problem:IR} can be continued with zero outside of $\Sigma$. However,
we need to compensate for the difference in energy and ensure the energy conservation condition to hold on $\Sigma$ by scaling the density function $f$ with an appropriate constant $c>0$ defined by
\begin{align*}
  c := \frac{\int_{\Sigma^k} g \, \dd S}{\int_{\Sigma} g \, \dd S};
\end{align*}
see~\cite[Section 3.4]{Froese2012}. Since we do not know $\Sigma^k$, also $c$ is unknown. Thus, we introduce $c$ as a new degree of freedom in our subproblems for the different right-hand sides $\phi^k$ of the boundary condition \eqref{eq:bc_unknown_phi} for $k=1,2,...$ and replace $f$ in the Monge-Amp\`ere equation \eqref{eq:MA_reflector} by $cf$ which guarantees the existence of a solution.

\paragraph{Uniqueness}
However, we cannot expect that there is only one $R$-convex solution, i.e., a solution of the Monge-Amp\`ere equation \eqref{eq:MA_reflector} in the elliptic case, for each inverse reflector problem. In fact there are infinitely many solutions; see Subsection~\ref{subsec:reflector_existence_uniqueness}. One possible choice of a condition to ensure uniqueness is to fix the size of the reflector. The reflector is parameterized by the distance function $u$, which controls the size of its shape. Hence, similar to~\cite[Section 3.4]{Froese2012}, we fix a parameter $\mathcal{G}>0$ and add the constraint
\begin{align}\label{eq:integral_constraint}
  \int_{\Omega} u(\vec{x}) \, \dd\vec{x} = \mathcal{G}.
\end{align}

\paragraph{Resulting problem}
Collecting all the conditions, we obtain the subproblems
\begin{align*}
  {\det}_{\lambda}^+ \left(\DD^2 u^k + A(\vec{x},u^k,\DD u^k)\right) &= cb(\vec{x},u^k,\DD u^k)
    && \text{for }\vec{x}\in\Omega,\\
  T^k(\vec{x})^T \nu(\vec{x}) &= \phi^{k-1}(\vec{x})
    && \text{for }\vec{x}\in\partial\Omega,\\
  \int_{\Omega} u(\vec{x}) \, \dd\vec{x} &= \mathcal{G}
\end{align*}
for $k=1,2,...,k_{\max}$, where $u^k$ and $c$ are the unknowns, $\mathcal{G}>0$ and $\lambda\geq 0$ are fixed parameters, $b$ equals the right-hand side of the Monge-Amp\`ere equation \eqref{eq:MA_reflector}, and $\phi^{k-1}$ is the orthogonal projection as defined in \eqref{eq:phi_k}.

\begin{remark}
  There is not plenty of existence and uniqueness theory available for equations of Monge-Amp\`ere type, in particular not for the general case where the determinant does not only depend on the Hessian of $u$. The most adequate theorems for our situation found by the authors are formulated in~\cite[Theorem 1.1]{LTU1986} and~\cite[Theorem 1.1]{Urbas1998} with Neumann boundary conditions. But unfortunately this cannot be applied because of the missing regularity of the boundary $\partial\Omega$ and the fact that our boundary conditions are nonlinear. However, it is worth noting that both theorems state that under some conditions there exists a unique solution of the elliptic Monge-Amp\`ere type equation, so the ellipticity constraint \eqref{eq:convexity_general} is important.
\end{remark}

%%%%%%%%%%%%%%%%%%%%

\section{Numerical simulations}\label{sec:results}

In this section we present some test cases which we use to verify our numerical solver for the inverse reflector problem. First we consider in Subsection~\ref{subsec:test_cases_standard_ma} five benchmark test cases for the Monge-Amp\`ere equation of standard type \eqref{eq:MA_standard}. Since these have also been discussed by Froese and Oberman~\cite{FO2011,FO2011a}, we can compare the convergence behavior of their and our methods. Afterwards we discuss results for Problem~\ref{problem:IR} in Subsection~\ref{subsec:test_cases_inverse_reflector}.

\paragraph{General implementation remarks}
All the computations have been carried out on a standard personal computer equipped with an AMD Phenom II X4 955 processor running at $3.2\,\rm{GHz}$. In order to verify the numerical solutions of the inverse reflector problem we perform a forward simulation of the reflector using the ray tracing software POV-Ray~\cite{CFK+1991}.

Note that in each step of the nonlinear solver, i.e., in each Newton iteration, in our collocation method we solve a sparse system of linear equations. For this purpose we use the unsymmetric multifrontal sparse LU factorization package (UMFPACK)~\cite{DD1997}.

\subsection{Test cases for the Monge-Amp\`ere equation of standard type}\label{subsec:test_cases_standard_ma}

We first define in Subsection~\ref{subsubsec:test_cases_five} five test cases. The results are given in Subsection~\ref{subsubsec:experiments_standard}.

%%%

\subsubsection{Five test cases}\label{subsubsec:test_cases_five}

Let us define $\Omega:=(0,1)\times(0,1)\subset\R^2$, $\vec{x}:=(x,y)^T\in\Omega$, and $\vec{x}_0:=(\frac{1}{2},\frac{1}{2})^T\in\Omega$. We consider five examples for the Monge-Amp\`ere equation of standard type with Dirichlet boundary conditions, i.e.,
\begin{align}\label{eq:example_MA}
  \det\left(\DD^2 u(\vec{x})\right) &= f(\vec{x})
    && \text{for }\vec{x}\in\Omega\text{ and}&
  u(\vec{x}) &= g(\vec{x})
    && \text{for }\vec{x}\in\partial\Omega.
\end{align}
In the following examples the exact convex solution is known and the boundary function $g$ is given as the restriction of the exact solution to the boundary  $\partial\Omega$.

In the \emph{first example}~\cite{BFO2010,DG2006,FO2011,FO2011a} the solution is in $C^2(\Omega)$ and radially symmetric. The exact solution and the right-hand side of the Monge-Amp\`ere equation \eqref{eq:example_MA} are given by
\begin{align}\label{eq:example1}
  u(\vec{x}) &= \exp\left(\frac{\norm{\vec{x}}_2^2}{2}\right)
\text{ and}& f(\vec{x}) &= \left(1+\norm{\vec{x}}_2^2\right)\exp\left(\norm{\vec{x}}_2^2\right).
\end{align}

The \emph{second example}, also taken from~\cite{FO2011,FO2011a}, has a solution $u$ in $C^1(\Omega)$ and is defined by
\begin{align}\label{eq:example2}
  u(\vec{x}) &= \frac{1}{2}\left(\max\{0,\norm{\vec{x}-\vec{x}_0}_2-0.2\}\right)^2
\text{ and} & f(\vec{x}) &= \max\left\{0, 1-\frac{0.2}{\norm{\vec{x}-\vec{x}_0}_2}\right\}.
\end{align}

A solution which is in $C^2(\Omega)$ but whose gradient has a singularity near $(1,1)^T\in\partial\Omega$ has also been discussed in~\cite{BFO2010,DG2004,DG2006,FO2011,FO2011a}. This \emph{third example} is given by the exact solution
\begin{align}\label{eq:example3}
  u(\vec{x}) &= -\sqrt{2-\norm{\vec{x}}_2^2}
  & &\text{with right-hand side}
  & f(\vec{x}) &= 2\left(2-\norm{\vec{x}}_2^2\right)^{-2}.
\end{align}
The solution $u$ is also in $W^{1,p}(\Omega)$ for any $p\in[1,4)$; see also~\cite{DG2006}.

In the \emph{fourth example}, also taken from~\cite{BFO2010,FO2011,FO2011a}, the solution is only Lipschitz continuous, i.e., the solution $u$ is in $C^{0,1}(\Omega)$, and is defined by
\begin{align}\label{eq:example4}
  u(\vec{x}) &= \norm{\vec{x}-\vec{x}_0}_2
  & &\text{with right-hand side}
  & f &= \pi \delta_{\vec{x}_0},
\end{align}
where $f$ is defined by the Dirac delta distribution. Note that $u$ is an Aleksandrov solution; see~\cite{FO2011,FO2011a} for the details. In~\cite{BFO2010,FO2011,FO2011a} the distribution is approximated by a piecewise constant function.
On a ball of radius $\nicefrac{h}{2}$, where $h$ is the spatial resolution of the grid, the approximation $f_h$ takes a value such that integral over the ball is conserved. This leads to
\begin{align*}
  f_h(\vec{x}) &:=
    \begin{cases}
      \nicefrac{4}{h^2}, & \text{for }\norm{\vec{x}-\vec{x}_0}_2\leq\nicefrac{h}{2},\\
      0,             & \text{otherwise.}
    \end{cases}
\end{align*}

For the \emph{fifth and last example} the exact solution is unknown such that we cannot compare the results with the exact solution. Nevertheless, Dean and Glowinski~\cite{DG2003,DG2006a,DG2008} and also Feng and Neilan~\cite{FN2009a} discussed this test case with right-hand side
\begin{align}\label{eq:example5}
  f &:= 1
\end{align}
and Dirichlet homogeneous boundary condition. Feng and Neilan~\cite{FN2009a} remark that there exist a unique convex viscosity solution but no classical one.

\begin{remark}\label{remark:typo}
  Froese and Oberman~\cite{FO2011a} claim the solution of \eqref{eq:example4} to be $u(\vec{x})=\sqrt{\norm{\vec{x}-\vec{x}_0}_2}$ which is neither a convex function nor a solution to this right-hand side, but it seems that they used the correct version of \eqref{eq:example4} in their numerical experiments. This is probably a mistake in writing.
\end{remark}

\begin{remark}\label{remark:existence_uniqueness}
  The existence of a solution is guaranteed for the first four test cases due to their definitions. For the uniqueness we refer to the general result~\cite[Corollary 17.2]{GT1983} for classical solutions, which states for our case in \eqref{eq:example_MA} as follows:

  Let $\Omega\subset\R^2$ be a domain. Suppose $u,v\in C^0(\bar{\Omega})\cap C^2(\Omega)$ are strictly convex and we have $\det(\DD^2 u)=\det(\DD^2 v)$ in $\Omega$ and $u=v$ on $\partial\Omega$. We then have $u\equiv v$ in $\Omega$ as well.
\end{remark}

%%%

\subsubsection{Results for the five test cases}\label{subsubsec:experiments_standard}

We now apply our solver to the five test cases introduced in Subsection~\ref{subsubsec:test_cases_five},
that all impose Dirichlet boundary conditions.
We still have to fix $\lambda\geq 0$ for the penalty term of our modified determinant ${\det}_{\lambda}^+$ in \eqref{eq:MA_convexity}, which we use to ensure the ellipticity of the Monge-Amp\`ere equation \eqref{eq:example_MA}. Since this additional term vanishes for the exact solution we should use a large value.  Preliminary tests reveal that $\lambda = 10^3$ is a good choice for all subsequent numerical experiments. Our collocation grid is defined as an equidistant grid of $N\times N$ points for different values of $N\in\N$. Note that the grid points coincide with the knots of the B-splines.

For the first four test cases Benamou, Froese, and Oberman~\cite{BFO2010,FO2011,FO2011a} suggest to use as an initial guess the solution of the Poisson boundary value problem
\begin{align}\label{eq:poisson_for_ini}
  \Delta u = \sqrt{2f}
\end{align}
with the same Dirichlet boundary conditions and for the same right-hand side function $f$ as for the corresponding Monge-Amp\`ere equation. In~\cite{FO2011,FO2011a} this Poisson equation is solved in a preprocessing step and after that the result is convexified by the method of Oberman~\cite{Oberman2008a} to ensure a convex initial guess.
Here we also use same initial guess for all five test cases, but, however, it turned out that our method works well even without convexifying the solution of \eqref{eq:poisson_for_ini}. We therefore omit this step.

The spline collocation method has been used to solve \eqref{eq:poisson_for_ini}. In order to validate the numerical result, we compare the maximum absolute error of the numerical solution $u$ at the $N^2$ collocation points with the exact solution $u^*$. The absolute errors are given in Table~\ref{tab:error}, while the corresponding computing times are denoted in Table~\ref{tab:time}. Note that the computing time measurements indicate the overall time for the computation including the computation of the initial guess and the nested iteration scheme. In Figure~\ref{fig:max_error_and_time} the dependency of the maximum error and the computing time on the number of unknowns are shown in a plot with logarithmic scale on both axes. We observe that the complexity of the solution method is proportional to $N^3$.

\begin{table}[ht]
  \centering
  \caption{Maximum error $\norm{u-u^*}_{\infty}$ for the first four test cases of Subsection~\ref{subsubsec:test_cases_five}.}
  \label{tab:error}
  { \footnotesize
    \newcommand{\mc}[1]{\multicolumn{1}{c|}{#1}}
    \newcommand{\mr}[1]{\multirow{2}{*}{#1}}
    \begin{tabular}{|c|c|c|c|c|}
      \hline
        \multicolumn{1}{|c|}{\mr{$N$}}
                & \mc{\mr{$C^2$ example \eqref{eq:example1}}}
                &  \mc{\mr{$C^1$ example \eqref{eq:example2}}}
                &  \mc{example with}
                &  \mc{\mr{$C^{0,1}$ example \eqref{eq:example4}}}\\
          & & & \mc{blow up \eqref{eq:example3}} &\\
      \hline
       31 & $9.60\cdot 10^{-5}$ & $1.18\cdot 10^{-4}$ & $3.76\cdot 10^{-3}$ & $1.25\cdot 10^{-2}$\\
       45 & $4.53\cdot 10^{-5}$ & $7.90\cdot 10^{-5}$ & $3.21\cdot 10^{-3}$ & $1.10\cdot 10^{-2}$\\
       63 & $2.29\cdot 10^{-5}$ & $4.40\cdot 10^{-5}$ & $2.75\cdot 10^{-3}$ & $9.00\cdot 10^{-3}$\\
       89 & $1.14\cdot 10^{-5}$ & $2.86\cdot 10^{-5}$ & $2.34\cdot 10^{-3}$ & $8.34\cdot 10^{-3}$\\
      127 & $5.58\cdot 10^{-6}$ & $2.37\cdot 10^{-5}$ & $1.97\cdot 10^{-3}$ & $8.50\cdot 10^{-3}$\\
      181 & $2.74\cdot 10^{-6}$ & $1.58\cdot 10^{-5}$ & $1.66\cdot 10^{-3}$ & $8.48\cdot 10^{-3}$\\
      255 & $1.37\cdot 10^{-6}$ & $1.01\cdot 10^{-5}$ & $1.41\cdot 10^{-3}$ & $8.73\cdot 10^{-3}$\\
      361 & $6.84\cdot 10^{-7}$ & $7.27\cdot 10^{-6}$ & $1.18\cdot 10^{-3}$ & $8.68\cdot 10^{-3}$\\
      \hline
    \end{tabular}
  }
\end{table}

\begin{table}[ht]
  \centering
  \caption{Computing time in seconds for the five test cases of Subsection~\ref{subsubsec:test_cases_five} (the wall-clock time on the otherwise idle computer has been measured).}
  \label{tab:time}
  { \footnotesize
    \newcommand{\mc}[1]{\multicolumn{1}{c|}{#1}}
    \newcommand{\mr}[1]{\multirow{2}{*}{#1}}
    \begin{tabular}{|c|d{-1}|d{-1}|d{-1}|d{6}|d{-1}|}
      \hline
        \multicolumn{1}{|c|}{\mr{$N$}}
                & \mc{$C^2$}
                &  \mc{$C^1$}
                &  \mc{example with}
                &  \mc{$C^{0,1}$}
                &  \mc{Viscosity}\\
          & \mc{example \eqref{eq:example1}}
          & \mc{example \eqref{eq:example2}}
          & \mc{blow up \eqref{eq:example3}}
          & \mc{example \eqref{eq:example4}}
          & \mc{solution \eqref{eq:example5}}\\
      \hline
       31 &    0.1 &    0.3 &    0.1 &    4.8 &    0.1\\
       45 &    0.1 &    0.6 &    0.2 &   13.3 &    0.2\\
       63 &    0.2 &    1.2 &    0.4 &   18.9 &    0.3\\
       89 &    0.6 &    3.2 &    0.9 &   58.6 &    0.9\\
      127 &    0.9 &    4.1 &    1.7 &   97.6 &    1.7\\
      181 &    2.7 &   12.8 &    4.9 &  345.2 &    5.1\\
      255 &    5.1 &   24.9 &    9.7 &  639.5 &    9.9\\
      361 &   18.4 &   79.8 &   31.6 & 2193.7 &   32.9\\
      \hline
    \end{tabular}
  }
\end{table}

\begin{figure}[ht]
  \centering
  \includegraphics{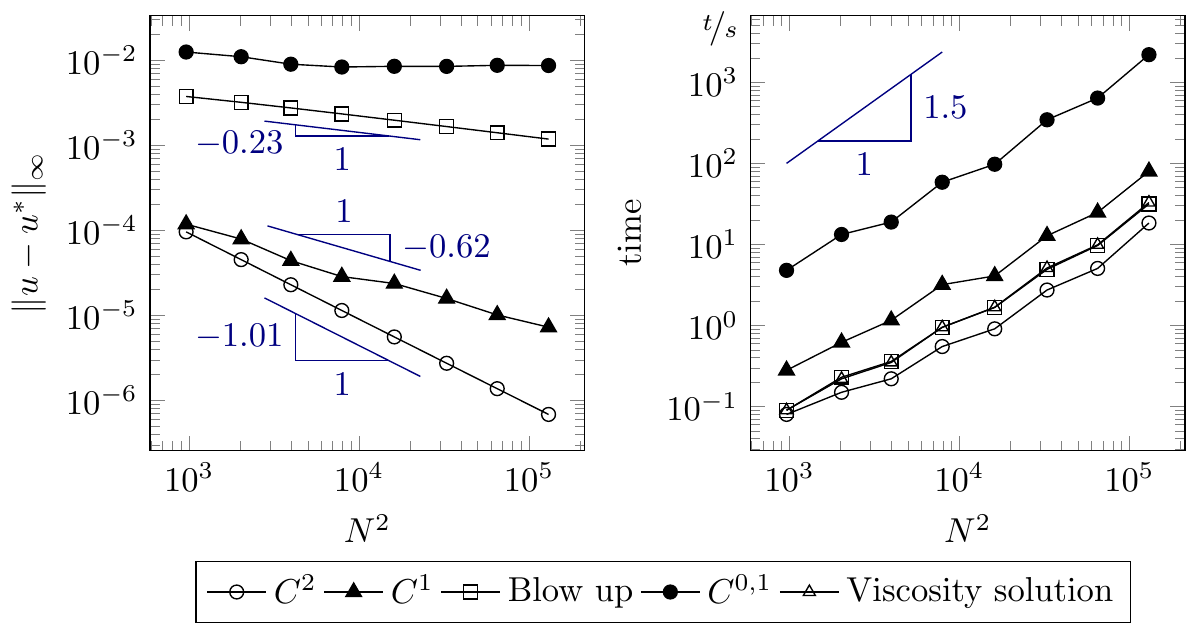}
  \caption{Plot of the maximum errors and computing times for the test cases of Subsection~\ref{subsubsec:test_cases_five} (see also Table~\ref{tab:error} and Table~\ref{tab:time}).}
  \label{fig:max_error_and_time}
\end{figure}

For a better comparison the grid sizes given by $N$ were chosen to match the choices of Froese and Oberman~\cite{FO2011a}. In that paper three of their methods~\cite{BFO2010,FO2011,FO2011a} are compared for the first four test cases, where it turned out that the standard finite difference method~\cite{BFO2010} performed best for the first example.
Using regression analysis for our method we observe that the curve in the double-logarithmic plot has a slope of $-1.01$ which corresponds to a quadratic convergence rate.
This rate agrees with that achieved by the finite difference scheme. Moreover, the differences in the maximum errors is less than a factor of $\nicefrac{3}{2}$.

For the little less smooth solution in the second example the standard finite difference method proposed in~\cite{BFO2010} still leads to smaller errors than the two methods in~\cite{FO2011,FO2011a}. Comparing the absolute errors our method improves the results of the standard finite difference method by a factor of $2$ to $3$ and we observe a slightly superlinear convergence rate.

The third example is a big challenge for the methods because of the blow up of the gradient of the solution at the point $(1,1)^T\in\partial\Omega$. Here the monotone scheme~\cite{FO2011} and the hybrid scheme~\cite{FO2011a} perform best.
Our method shows a convergence that is approximately proportional to the square root of the mesh size. It works more precise than the standard finite difference method by a factor of about $4$ but is not as accurate as the other two schemes whose maximal errors are between $2\cdot 10^{-3}$ and $4\cdot 10^{-5}$. Due to the fact that the schemes~\cite{FO2011,FO2011a} are constructed to converge also to viscosity solutions these methods are suited for less smooth solutions.

The solution of the fourth example does not have a continuous first derivative in $\Omega$ such that it is very difficult to handle even for standard spline interpolation. Here all three methods given in~\cite{BFO2010,FO2011,FO2011a} and also our method do not converge, the error does not drop below $10^{-3}$.
Interestingly, we observe that all three methods, the two methods of~\cite{FO2011,FO2011a}, and our method, stagnate for $N$ larger than $89$. In fact, we do not even expect that our collocation method converges, because it requires the solution to be twice differentiable.

In Figure~\ref{fig:example5} we visualize the result for the fifth test case. In fact, the solution is convex. Figures~\ref{fig:example5_b} and~\ref{fig:example5_c} show cross section of the solution along the $x$-axis and along the diagonal, respectively. These can be compared with those of Dean and Glowinski~\cite{DG2003,DG2006a,DG2008} and Feng and Neilan~\cite{FN2009a}. We observe that both the curvature as well as the minimal values of the functions agree with those in the literature.

%%%

\begin{figure}[ht]
  \def\WIDTH{0.32\linewidth}
  \centering
  \subfloat[3d plot.]{
    \includegraphics[width=0.32\linewidth]{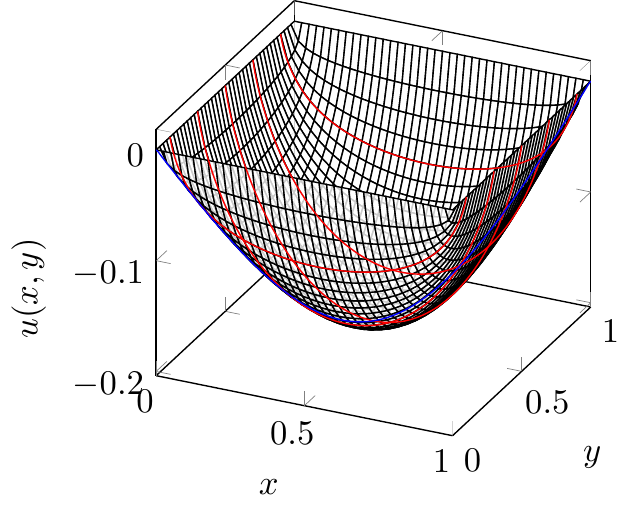}
    \label{fig:example5_a}
  }
  \hfill
  \subfloat[Cross section along $x$-axis.]{
    \includegraphics[width=0.30\linewidth]{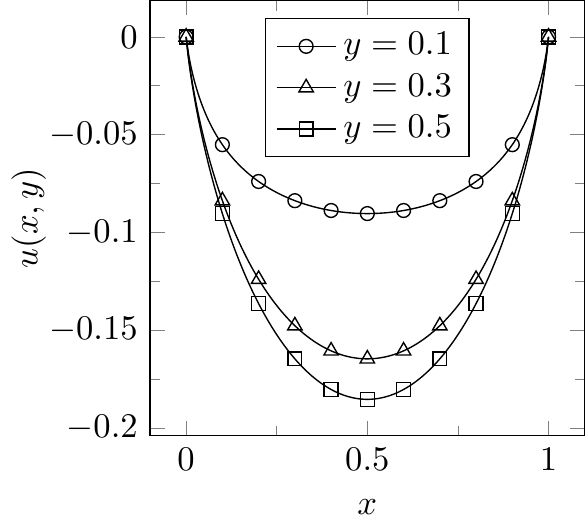}
    \label{fig:example5_b}
  }
  \hfill
  \subfloat[Diagonal cross section.]{
    \includegraphics[width=0.30\linewidth]{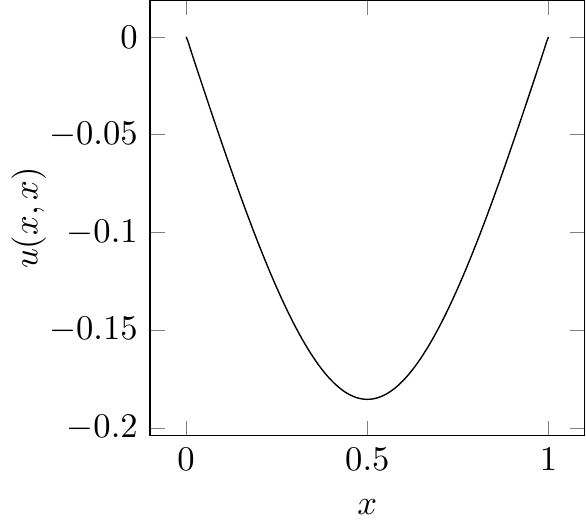}
    \label{fig:example5_c}
  }
  \caption{Solution of the fifth example computed for $N=181$.}
  \label{fig:example5}
\end{figure}

%%%

\subsection{Test cases for the inverse reflector problem}\label{subsec:test_cases_inverse_reflector}

First, in Subsection~\ref{subsubsec:reflector_procedure} we give some additional details that are crucial for numerically solving the inverse reflector problem. Then in Subsection~\ref{subsubsec:reflector_setting} we define the geometric setting of our test case for the inverse reflector problem. Afterwards we describe in Subsection~\ref{subsubsec:reflector_initial_guess} how we obtain a good initial guess and in Subsection~\ref{subsubsec:reflector_results} we present the results for some examples.

%%%

\subsubsection{Solution procedure}\label{subsubsec:reflector_procedure}

We now briefly discuss the procedure of numerically solving the inverse reflector problem. To this end we focus on three issues that particularly need to be handled to successfully solve Problem~\ref{problem:IR}.

\paragraph{Boundary condition}
In Subsection~\ref{subsec:ma_equation} we saw the mathematical formulation of the reflector problem for the near field and in Subsection~\ref{subsec:reflector_boundary} a relaxation to a sequence of subproblems. For each subproblem we have to solve an equation of Monge-Amp\`ere type with an adjusted boundary function $\phi$. Moreover, we use an iterative nonlinear solver for the solution of each subproblem. To avoid solving the Monge-Amp\`ere equation many times, we intertwine the iterations and update the boundary function $\phi$ immediately after each correction step in the iterative nonlinear solver instead of not updating $\phi$ until the nonlinear solver terminates.

Since we work on a rectangular target set $\Sigma$ we face the problem that the outer normal vector of $\partial\Sigma$ is not defined in a corner. Here we use the normalized sum vector of the two outer normal vectors of both adjacent edges, i.e., the outer normal vectors at the corners of a rectangle $(a,b)\times(c,d)$ are given by the vectors $(\pm \nicefrac{1}{\sqrt{2}},\pm \nicefrac{1}{\sqrt{2}})^T$.

\paragraph{Dark areas on the target}
Let the density function $g$ for our target illumination on $\Sigma$ be given by $8$ bit digital grayscale images, i.e. the gray values of the image are integer values in the range $0, \dots, 255$. Since $g$ is in the denominator on the right-hand side of the Monge-Amp\`ere equation \eqref{eq:MA_reflector}, we require that $g$ is bounded away from zero. This lower bound should be as small as possible. To ensure this constraint
we adjust brightness and contrast of the input image $g$ and consider the image
\begin{align}\label{eq:increas_gray_values}
  \tilde{g}(\vec{Z}):=g(\vec{Z})+\max\{0,20-\min_{\vec{Z'}\in\Sigma}g(\vec{Z'})\}
\end{align}
instead, where the value of $20$ leads to good results. Afterwards this density function needs to be normalized such that the energy conservation \eqref{eq:energy_conservation} holds true.

However, if we try to realize black values in the target image by letting some pixel values go to zero, the right-hand side of the Monge-Amp\`ere equation \eqref{eq:MA_reflector} tends to infinity. The left-hand side is more or less the determinant of the Hessian of $u$; see, e.g., the special case in equation \eqref{eq:MA_reflector_special}, which also needs to go to infinity at some points because of the surjectivity constraint \eqref{eq:MA_reflector_second_boundary}. It follows that the curvature of the reflector must be infinitely large at these points which leads to a kink on its surface.

\paragraph{Nested iteration}
On the one hand our nonlinear solver profits from a good choice of the initial guess. But on the other hand, if we want to produce a very complex image on the target, we need to define our ansatz functions on a very fine grid. Thus we have many degrees of freedom which makes it difficult to obtain a good initial guess.

An efficient way to address this problem is to apply the multilevel technique of Subsection~\ref{subsec:splines_nested}. We therefore start the computation on a very coarse grid of dimension $21\times 21$ and solve the inverse reflector problem. Our target density function $g$ will be given by an image of  size $512\times 512$ pixels. Thus we cannot expect to be able to solve the reflector problem accurately on such a coarse grid. We therefore also coarsen the image. For this reason we define the \emph{standard mollifier function} $\varphi(\vec{x}) := \exp(\nicefrac{-1}{(1-\norm{\vec{x}}_2^2)})$ if $\norm{\vec{x}}_2<1$ and zero otherwise. A discrete approximation of the mollifier function with a support of size $n\times n$ pixels is given by
\begin{align}\label{eq:mollifier}
  \varphi_n(i,j) &:= \frac{\varphi\left(2\frac{i}{n},2\frac{j}{n}\right)}
             {\sum_{r,s\in\Z}\varphi\left(2\frac{r}{n},2\frac{s}{n}\right)}
\end{align}
for $i,j\in\Z$. We now convolve the image with $\varphi_n$ for different $n\in\N$ and solve the problem for these modified images on grids of size $N\times N$ for appropriate $N\in\N$, i.e., we solve the problem many times for different pairs $(N,n)$ to improve the solution; see Figure~\ref{fig:nested_iteration}. We use the following pairs in the given order: $(21,55)$, $(41,55)$, $(41,19)$, $(81,19)$, $(81, 7)$, $(161,7)$, $(161,3)$, $(321,3)$.

\begin{figure}[ht]
  \centering
  \includegraphics[width=0.7\linewidth]{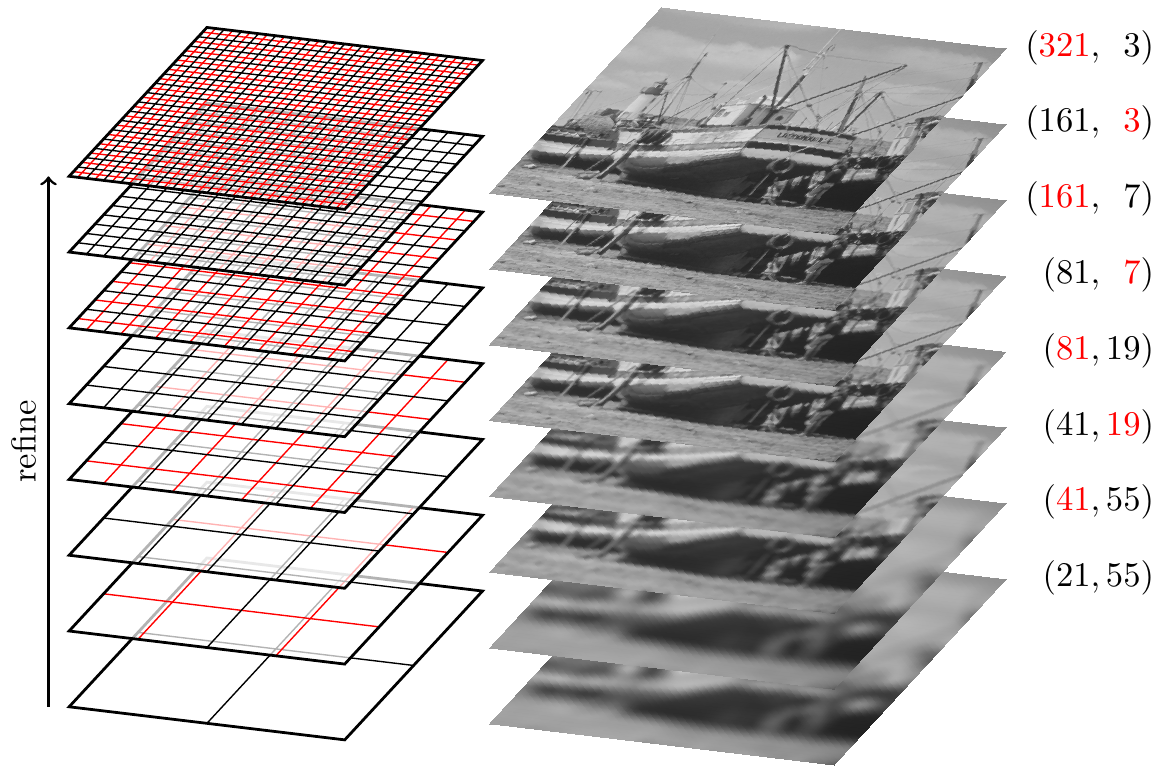}
  \caption{Nested iterations to improve the initial guess from bottom to top with given pairs $(N,n)$, where $N\times N$ is the resolution of the grid and $n$ the parameter for the mollifier $\varphi_n$ in \eqref{eq:mollifier}.}
  \label{fig:nested_iteration}
\end{figure}

%%%

\subsubsection{Optical and geometric setting}\label{subsubsec:reflector_setting}

For the illumination of the mirror we choose an isotropic light source, i.e., $f\equiv 1$. Our geometrical setting is given by the target surface, defined by
$ \Sigma:=\{\vec{Z}=(z_1,z_2,z_3)^T\in\R^3\,:\,
    z_1\in\left(-1.5, 1.5\right),\,
    z_2\in (1, 4),\,
    z_3=-5\} $
and by the position of the reflector, which is given in the dimensioned drawing in Figure~\ref{fig:reflector_situation_a}. In the mathematical model, the size of the reflector is controlled by an appropriate constant $\mathcal{G}$ in \eqref{eq:integral_constraint} which in the following examples is  $\mathcal{G}=0.417674$.
For the modified determinant \eqref{eq:modified_det} we again choose the penalty constant $\lambda=10^3$, which leads to good results for all of our examples.

%%%

\subsubsection{Initialization}\label{subsubsec:reflector_initial_guess}

The choice of the initial guess is crucial for the convergence of the Newton-type scheme in the collocation method.
Therefore we need an initial guess that is close enough to the solution.
Since there are already other methods available to solve the inverse reflector problem, we can resort to one of them for the initialization.
Due to the nested iteration approach we only need to calculate an initial guess for a strongly blurred input density distribution on a very coarse initial grid.
For that reason we choose the method of supporting ellipsoids~\cite{KO1997,KO1998} for this task, which is a viable choice, because of the low resolution the complexity of the method is not too high; see also Section~\ref{subsec:ellipsoid_method}.

In principle, we need to generate a new initial guess when the desired density function $g$ changes. However, numerical evidence shows that this is not necessary and that we can prepare an universal initial guess that depends on the optical and geometric setup but no longer on $g$.
This is probably possible, because the collocation method starts on a very coarse grid using a strongly blurred and thus an ``almost'' constant version of $g$.

We therefore invoke the method of supporting ellipsoids and compute a reflector surface that produces a constant density function $g$ on the target.
The solution specifies a surface that consists of segments of ellipsoids of revolution. Next we approximate the solution in our B-spline ansatz space corresponding to the coarse initial grid using spline interpolation and we solve the inverse reflector problem again on the same grid with our spline collocation method. The output of the forward simulation of the resulting reflector is shown in Figure~\ref{fig:reflector_situation_b}. As desired the reflector produces a homogeneous illumination pattern on the target. In the following we use this reflector as the initial guess for all calculations in the same geometrical and optical setting but for different target illuminations $g$.

%%%

\begin{figure}[ht]
  \def\WIDTH{0.25\linewidth}
  \centering
  \subfloat[Geometric setting]{
    \includegraphics[width=0.22\linewidth]{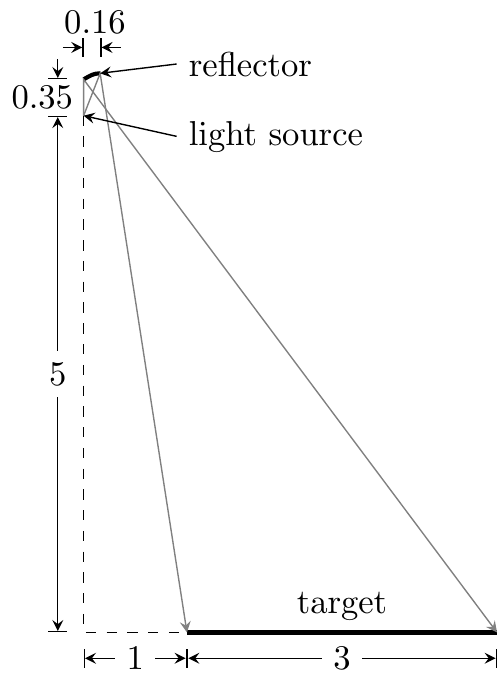}
    \label{fig:reflector_situation_a}
  }
  \hspace{1cm}
  \subfloat[Illumination pattern of the initial guess on the target $\Sigma$.]{
    \includegraphics[width=0.32\linewidth]{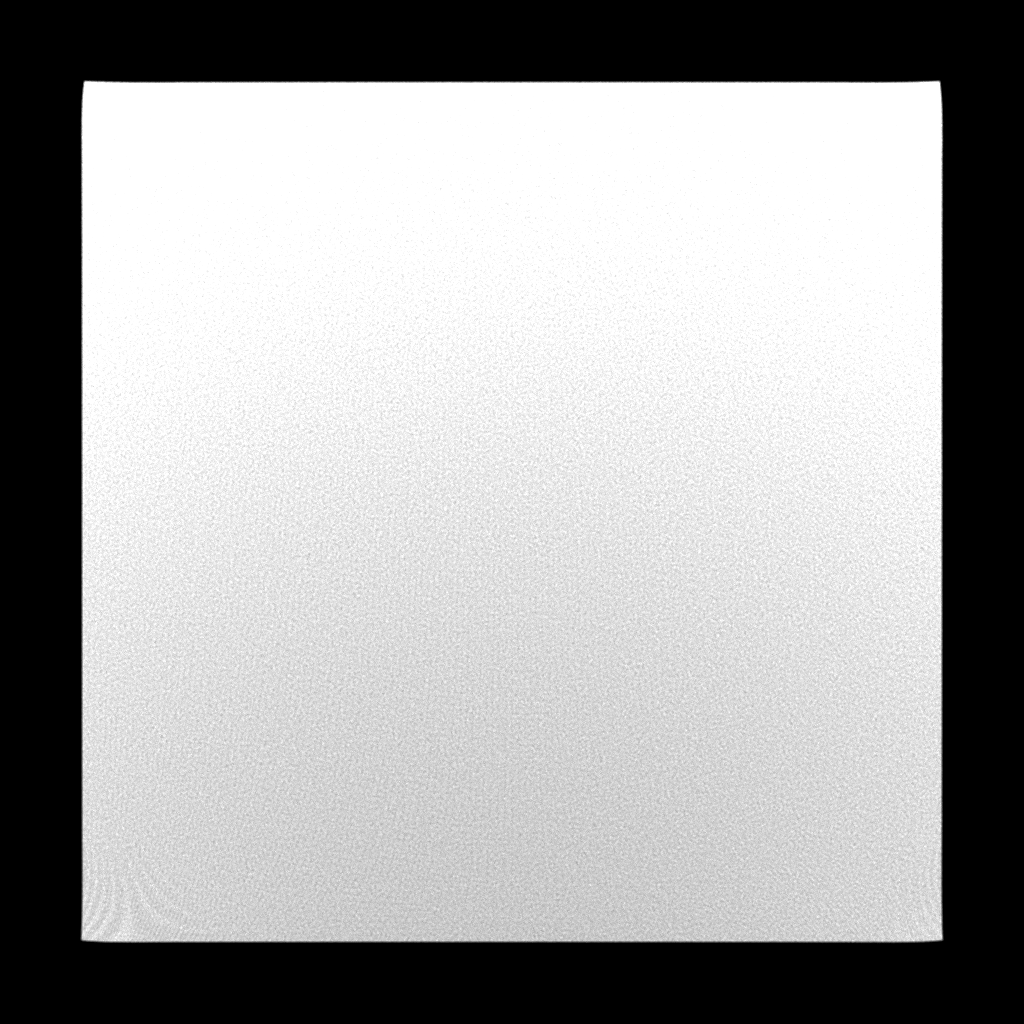}
    \label{fig:reflector_situation_b}
  }
  \caption{Setting of the problem and initial guess.}
  \label{fig:reflector_situation}
\end{figure}

%%%

\subsubsection{Results for the inverse reflector problem}\label{subsubsec:reflector_results}

We now calculate the reflector surfaces for three test cases, where the desired target illuminations $g$ are given by three common grayscale test images from the USC-SIPI Image Database~\cite{USC-SIPI}.
In a post-processing step we run the forward simulation by ray tracing to compute the actual illumination pattern produced on the target by the designed reflector surface.
Figure~\ref{fig:reflector_results1} shows the simulation results. Each of the three output images is very close to the corresponding original image and, although the images are slightly blurred, even complex details can easily be identified.

Note that the differences in the illumination between dark areas in the original pictures and the simulations are resulting from the fact that we have to lift dark gray values up; see \eqref{eq:increas_gray_values}. Therefore it is impossible to produce real black areas on the target.

The three test images pose different challenges for our reflector design algorithm.
In the first test image \emph{Boat}, see Figure~\ref{fig:reflector_results1_a}, the problem is to meet the straight lines of the mast, the person standing next to the boat and the lettering on the stern of the vessel. The simulation result looks very good, straight lines are depicted almost perfectly in all directions and the name of the vessel is still readable but only barely.
In our selection the second image \emph{Goldhill}, see Figure~\ref{fig:reflector_results1_b},  represents a different type of pictures. It is rich of different patterns, e.g. the patterns of the roofing tiles and the windows in the foreground as well as the patterns of the trees and bushes in the background, such that we can test how well different patterns are reproduced.
Apart from the slight blurring effect the different patterns are well depicted and can be distinguished easily.
The challenge of the image \emph{Mandrill} is to depict the hair of the beard of the monkey. We can see in Figure~\ref{fig:reflector_results1_c} that our algorithm also passes this test.

\begin{figure}[ht]
  \def\WIDTH{0.32\linewidth}
  \def\HSPACE{-2mm}
  \def\VSPACE{-3mm}
  \centering
  \subfloat[Boat]{
    \parbox{\WIDTH}{
      \includegraphics[width=\linewidth]{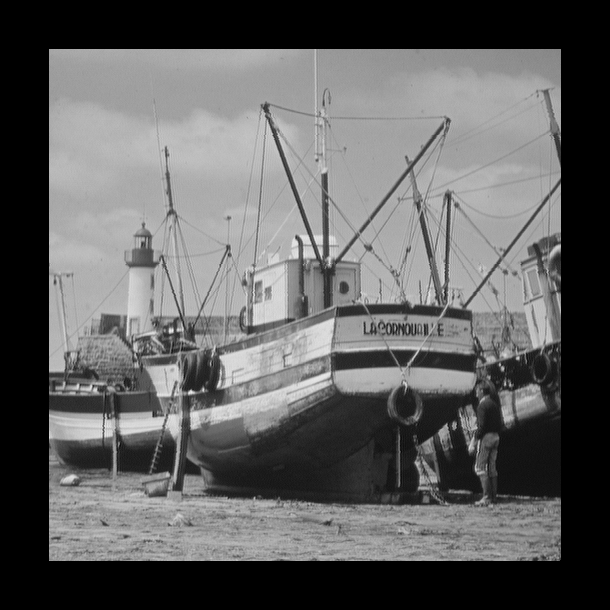}
      \\\vspace{\VSPACE}
      \includegraphics[width=\linewidth]{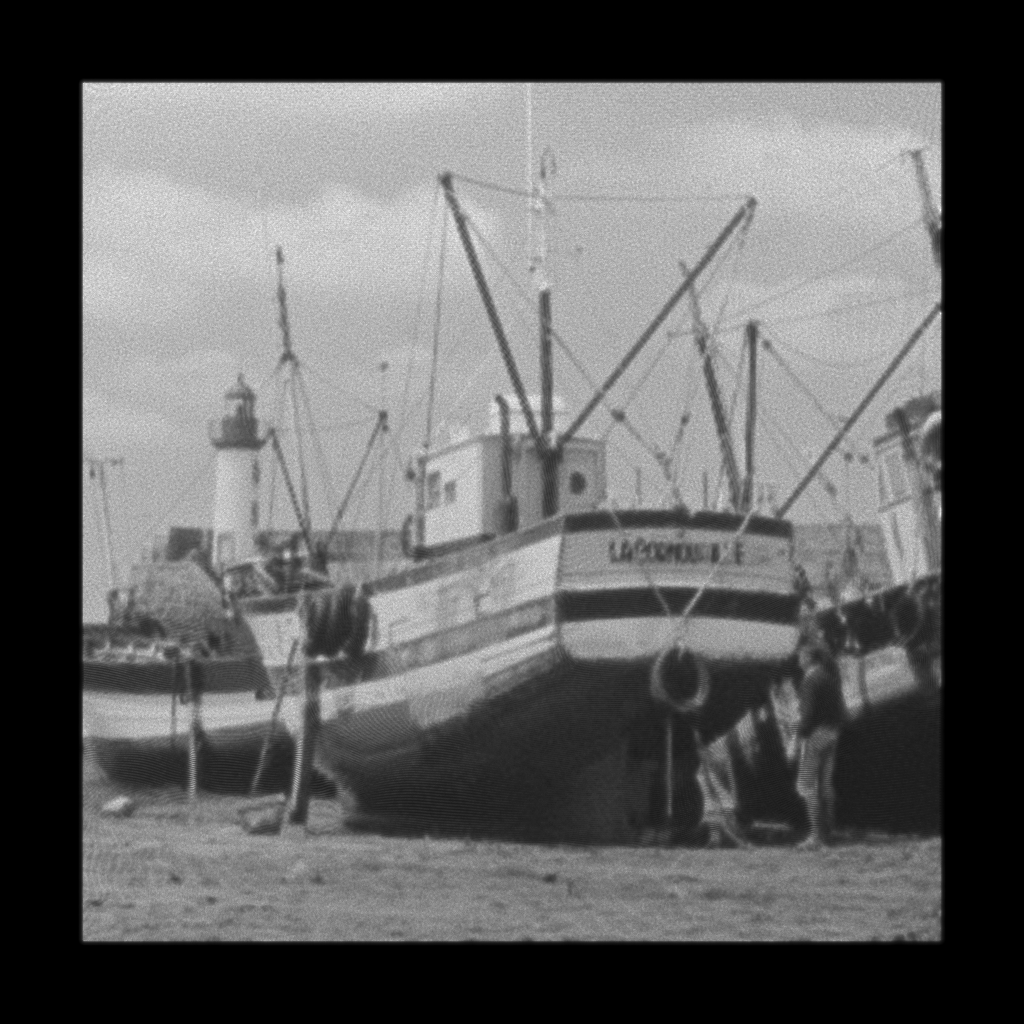}
    }
    \label{fig:reflector_results1_a}
  }
  \hspace{\HSPACE}
  \subfloat[Goldhill]{
    \parbox{\WIDTH}{
      \includegraphics[width=\linewidth]{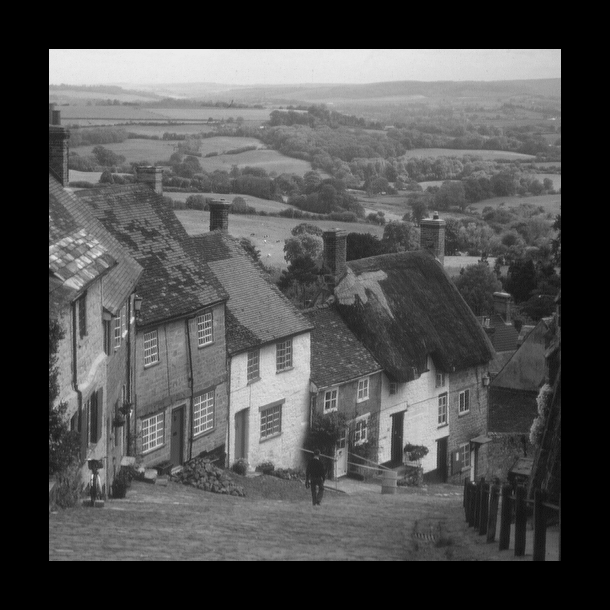}
      \\\vspace{\VSPACE}
      \includegraphics[width=\linewidth]{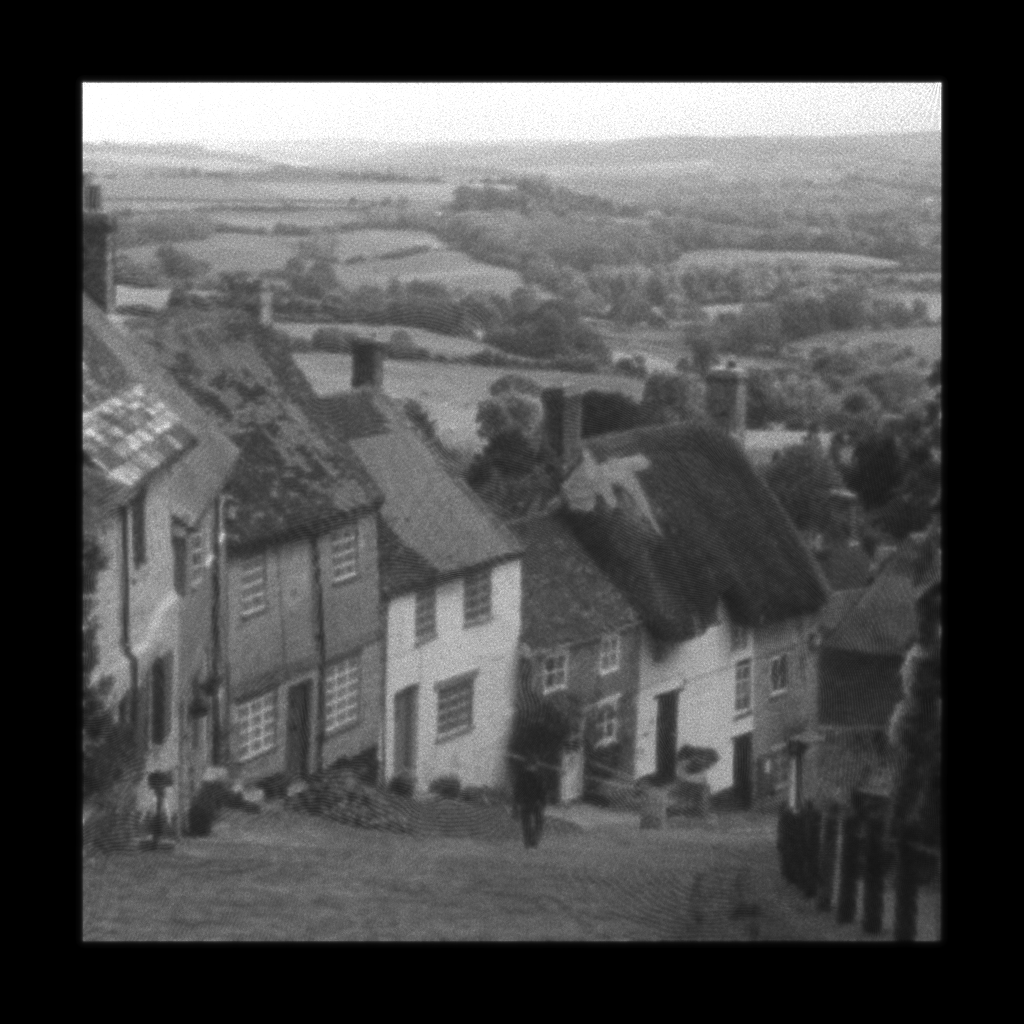}
    }
    \label{fig:reflector_results1_b}
  }
  \hspace{\HSPACE}
  \subfloat[Mandrill]{
    \parbox{\WIDTH}{
      \includegraphics[width=\linewidth]{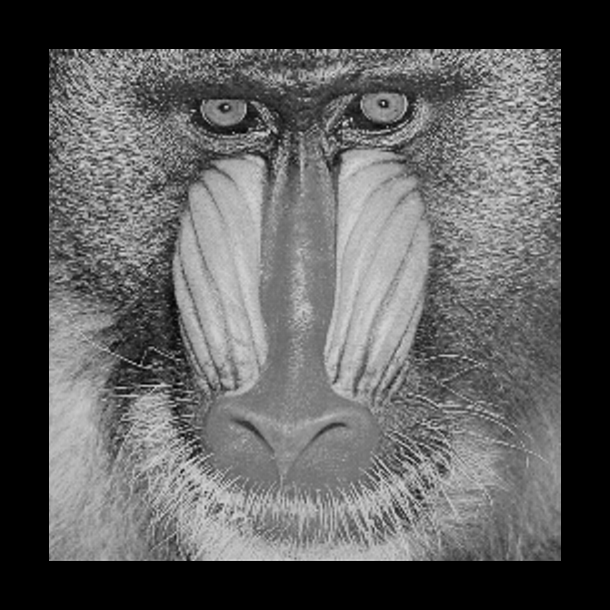}
      \\\vspace{\VSPACE}
      \includegraphics[width=\linewidth]{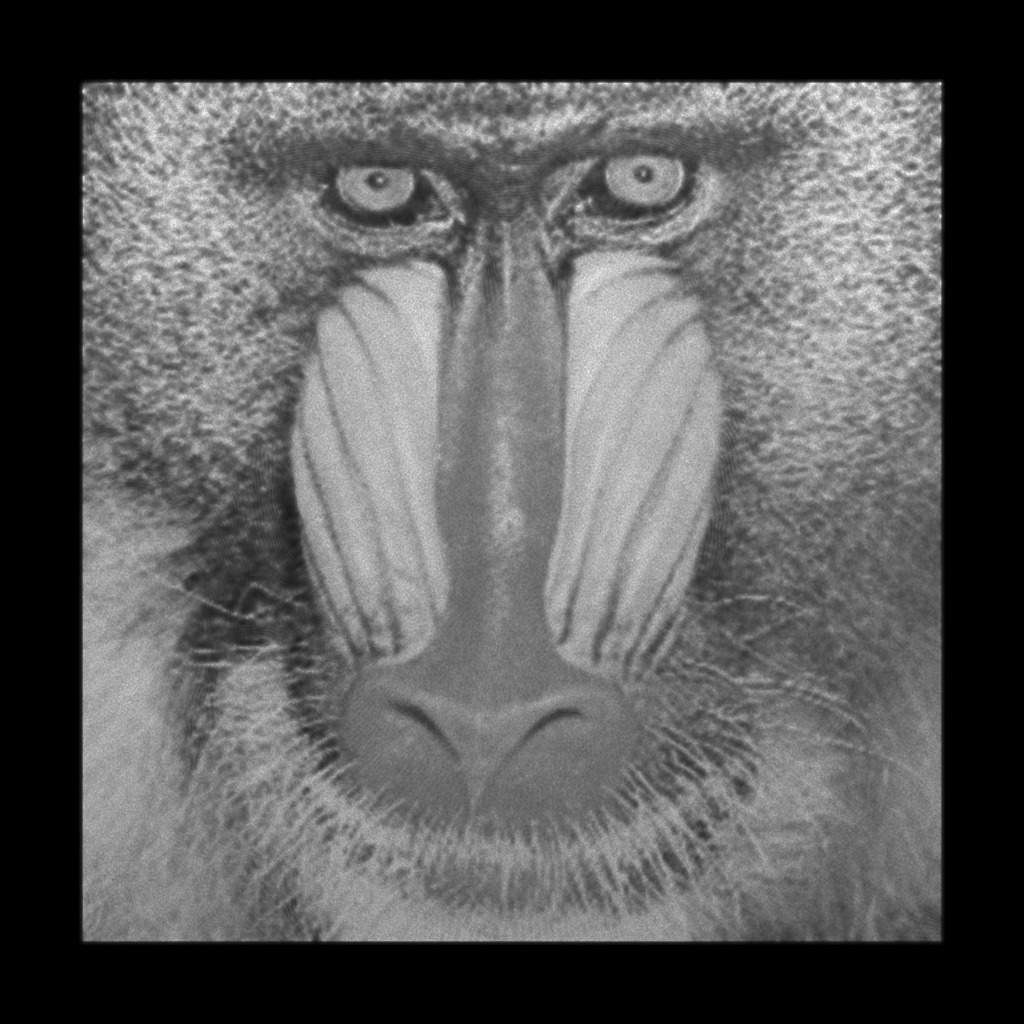}
    }
    \label{fig:reflector_results1_c}
  }
  \caption{Simulation results for three test images. First row: desired distribution (original image);
           second row: distribution after forward simulation by ray tracing).}
  \label{fig:reflector_results1}
\end{figure}

As our final numerical example we compute the surface of a mirror that projects our institute's logo on the screen. Note that this type of cartoon-like images with high contrast and sharp edges is most challenging for our algorithm.
Nevertheless, our algorithm achieves a very good reproduction of the logo; see Figure~\ref{fig:reflector_results4_a} for the desired intensity pattern and Figure~\ref{fig:reflector_results4_b} for the forward simulation result by ray tracing.
In Figure~\ref{fig:reflector_diff_a} we show the position and the coarse shape of the mirror surface, while the fine structure that contains the information of the image is visualized in Figure~\ref{fig:reflector_diff_b} after a high-pass filtering process.
Note that lighter areas on the screen correspond to large areas on the mirror surface.

\begin{figure}[ht]
  \def\WIDTH{0.32\linewidth}
  \centering
  \subfloat[Desired light distribution (original image).]{
    \includegraphics[width=\WIDTH]{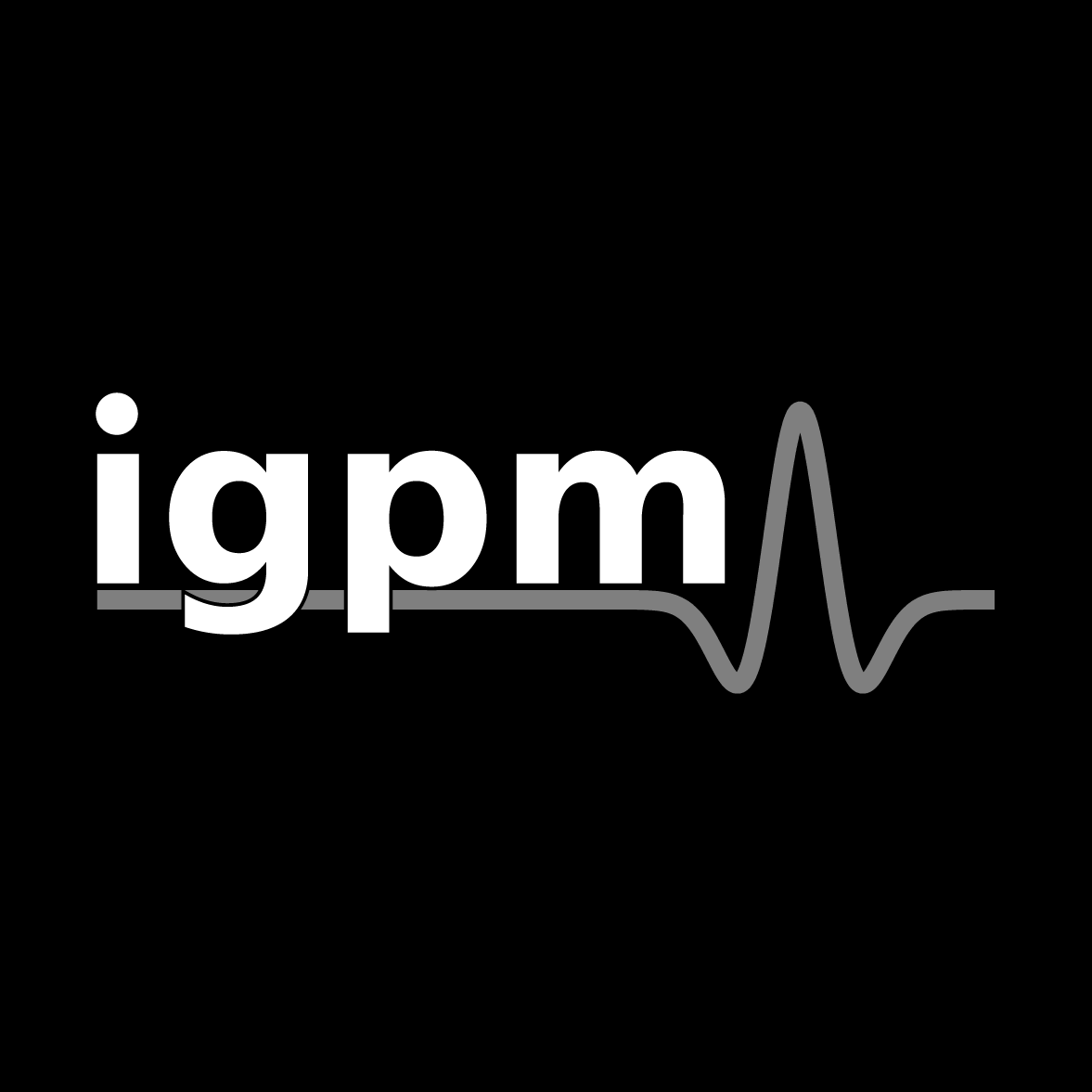}
    \label{fig:reflector_results4_a}
  }
  \hspace{2mm}
  \subfloat[Light distribution after forward simulation by ray tracing (result).]{
    \includegraphics[width=\WIDTH]{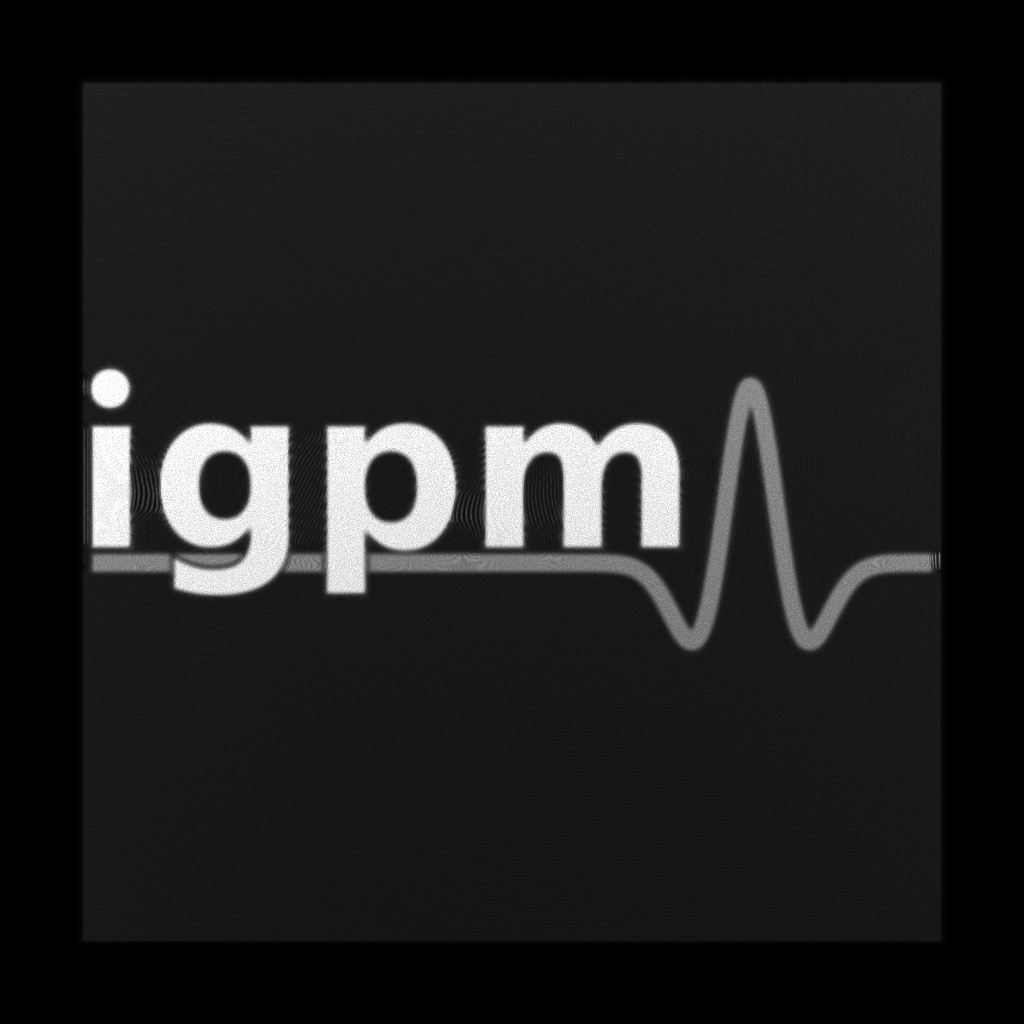}
    \label{fig:reflector_results4_b}
  }\\
  \subfloat[Reflector surface in correct geometrical position (overview)]{
    \includegraphics[width=\WIDTH]{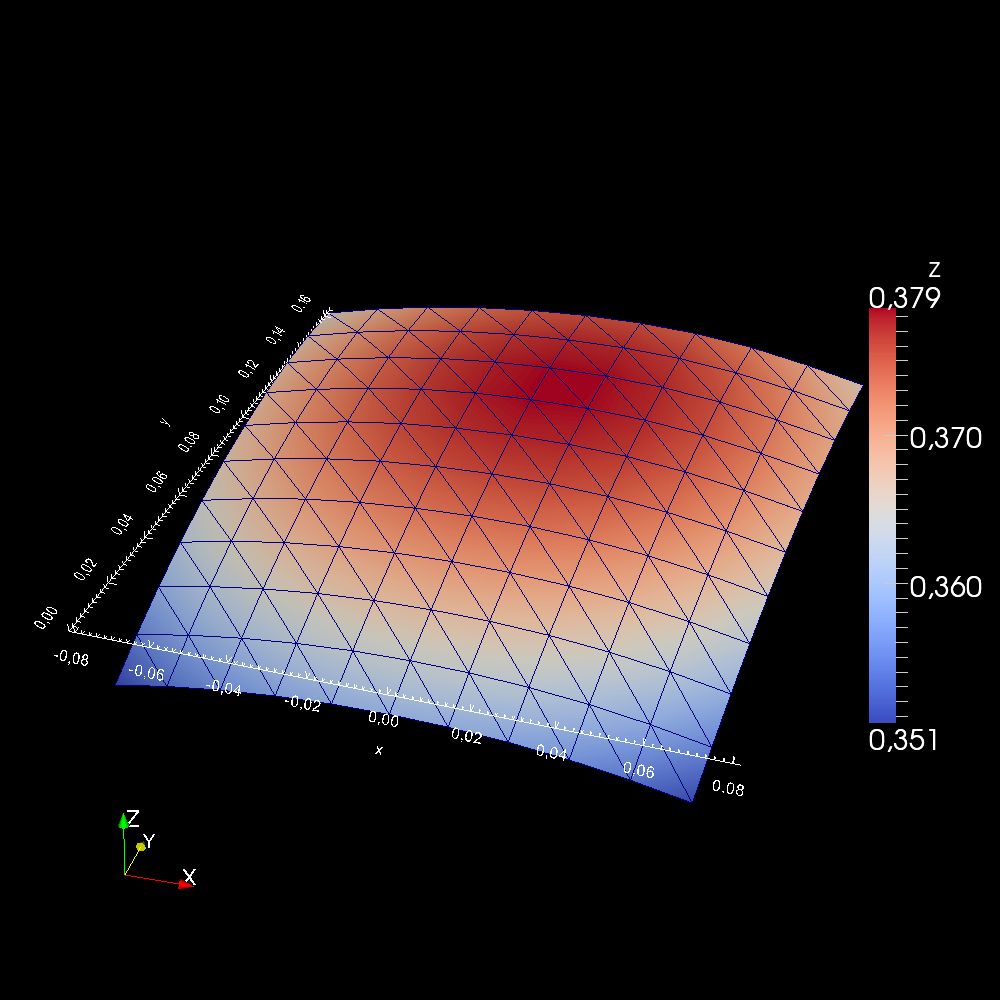}
    \label{fig:reflector_diff_a}
  }
  \hspace{2mm}
  \subfloat[High-frequency components of the reflector (fine structure).]{
    \includegraphics[width=\WIDTH]{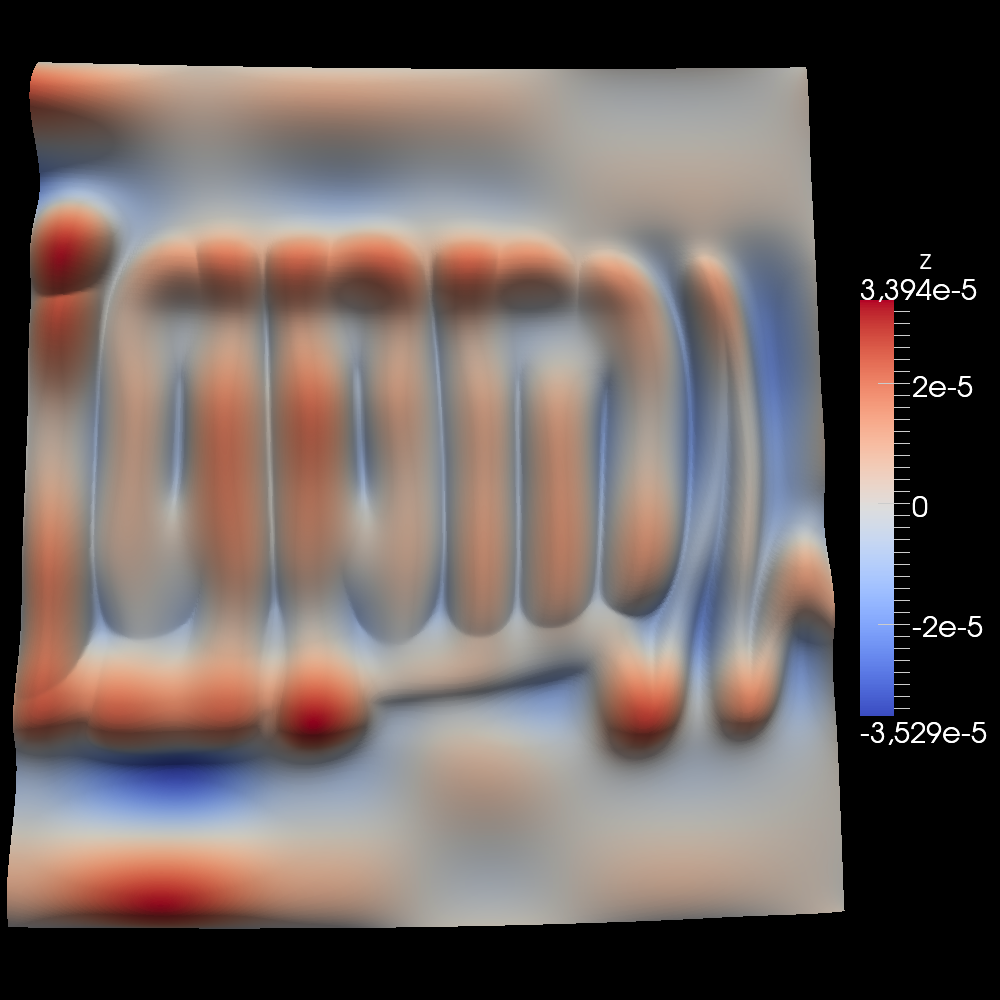}
   \label{fig:reflector_diff_b}
  }
  \caption{Simulation results for our institute's logo.}
  \label{fig:reflector_diff}
\end{figure}

%%%%%%%%%%%%%%%%%%%%

\section{Conclusion and outlook}\label{sec:conclusionoutlook}

We have presented a new B-spline collocation method for the numerical solution of Monge-Amp\`ere type equations, which are strong\-ly nonlinear partial differential equations. Some extensions and manipulations of the equations and boundary conditions have been explained in detail that render it possible to apply the collocation method to the solution of the inverse reflector problem formulated as a Monge-Amp\`ere type equation.

In comparison with existing schemes developed in~\cite{BFO2010,FO2011,FO2011a} our B-spline collocation method produces results that are similar to the finite difference scheme in~\cite{BFO2010}, which is the most accurate method for smooth solutions proposed in these publications.

The largest obstructions encountered for the numerical solution of the inverse reflector problem are how to handle the boundary conditions, how to ensure the uniqueness of the solution, and how to achieve convergence in the numerical scheme. In fact, we explain how these issues can be resolved and that the B-spline collocation method is well-suited even for this strongly nonlinear problem.

In future work the authors plan to extend the numerical method to support other optical devices, in particular lenses. The problem is then, for example, to determine the two surfaces of a lens such that all light passing through this lens is redirected onto the target and produces a prescribed illumination pattern. This is a strongly related problem and can also be modeled by a Monge-Amp\`ere type equation; see, e.g., \cite{Gutierrez2014}.

In modern lighting applications sources can often no longer be assumed to be point sources, e.g., in compact optical systems using light emitting diodes (LEDs). Therefore the challenging question arises how to handle extended light sources in the inverse reflector problem.

%%%%%%%%%%%%%%%%%%%%

\section*{Acknowledgment}

The authors are deeply indebted to Professor Dr. Wolfgang Dahmen for many fruitful and inspiring discussions on the topic of solving equations of Monge-Amp\`ere type.

%%%%%%%%%%%%%%%%%%%%

\newpage

\end{document}